\def\Dj{\hbox{D\kern-.73em\raise.30ex\hbox{-}
\raise-.30ex\hbox{}}}
\def\dj{\hbox{d\kern-.33em\raise.80ex\hbox{-}
\raise-.80ex\hbox{\kern-.40em}}}
\newtheorem{thm}{Theorem}
\newtheorem{theo}{Theorem}
\newtheorem{lem}[theo]{Lemma}
\newtheorem{cor}[]{Corollary}
\newtheorem{prob}[]{Problem}
\newcommand{\beq}{\begin{eqnarray}}
\newcommand{\eeq}{\end{eqnarray}}
\newcommand{\beqs}{\begin{eqnarray*}}
\newcommand{\eeqs}{\end{eqnarray*}}
\begin{document}

\baselineskip=0.30in

\vspace*{25mm}

\begin{center}
{\LARGE \bf On the Extremal Zagreb Indices of $\mathbf{\textit{n}}$--Vertex \\[2mm] Chemical Trees with Fixed Number of \\[2mm]
Segments or Branching Vertices}

\vspace{5mm}

{\large \bf Sadia Noureen$^1$}, {\large \bf Akbar Ali}$^{2,3,}$\footnote{Corresponding author},
{\large \bf Akhlaq Ahmad Bhatti}$^1$

\vspace{3mm}

\baselineskip=0.20in

$^1${\it Department of Sciences and Humanities,\\ National University of Computer and Emerging Sciences, Lahore Campus,\\ B-Block, Faisal Town, Lahore, Pakistan} \\
{\tt sadia.tauseef@uog.edu.pk, akhlaq.ahmad@nu.edu.pk}\\[2mm]

$^2${\it Department of Mathematics, Faculty of Science,\\ University of Ha\!'il, Ha\!'il 81451, Saudi Arabia}\\
{\tt akbarali.maths@gmail.com}\\[2mm]

$^3${\it Knowledge Unit of Science, University of Management and Technology, \\
Sialkot 51310, Pakistan}

\vspace{6mm}


\end{center}

\begin{abstract}
Let $\mathcal{CT}_{n,k}$ and $\mathcal{CT}^*_{n,b}$ be the classes of all $n$-vertex chemical trees with $k$ segments and $b$ branching vertices, respectively, where $3\le k\le n-1$ and $1\le b< \frac{n}{2}-1$.
The solution of the problem of finding trees from the class $\mathcal{CT}_{n,k}$ or $\mathcal{CT}^*_{n,b}$, with the minimum first Zagreb index or minimum second Zagreb index follows directly from the main results of [MATCH Commun. Math. Comput. Chem. 72 (2014) 825--834] or [MATCH Commun. Math. Comput. Chem. 74 (2015) 57--79].
In this paper, the chemical trees with the maximum first/second Zagreb index  are characterized from each of the aforementioned graph classes.
\end{abstract}

\baselineskip=0.30in

\section{Introduction}
All the graphs discussed in this paper are simple and connected. Chemical compounds can be represented by graphs, known as chemical graphs, in which vertices correspond to atoms and edges represent the bonds of the considered chemical compound.  Let $G$ be a graph with vertex set $V(G)$ and edge set $E(G)$. If two vertices $u$ and $v$ of the graph $G$ are adjacent, then the edge connecting them will be denoted by $uv$. The number of vertices adjacent to the vertex $u\in V(G)$ is its degree, and it will be denoted by $d_u(G)$. In a chemical graph, every vertex has degree at most 4. Let $n_{i}(T)$ be the number of vertices of degree $i$ in a graph $G$.
Let $N_G(u)$ be the set of all those vertices of $G$ that are adjacent to the vertex $u\in V(G)$. A vertex of degree one is called a \emph{pendent} vertex. A vertex of degree more than two is known as a \emph{branching} vertex. A pendent vertex adjacent to a branching vertex is called a \emph{starlike pendent} vertex.
A graph with $n$ vertices is called $n$-vertex graph. When the graph under consideration is clear, we drop ``$G$'' from the graph theoretical notations -- for example, we write $d_u$, $n_i$ and $N(u)$ instead of $d_u(G)$, $n_i(G)$ and $N_G(u)$, respectively. If $V(G)=\{v_1,v_2,...,v_n\}$ then the sequence $(d_{v_1},d_{v_2},...,d_{v_n})$ is called the degree sequence of $G$ and it is usually assumed that $d_{v_1}\ge d_{v_2}\ge\cdots\ge d_{v_n}$. Undefined terminology and notations from (chemical) graph theory can be found in books \cite{Tri-1, Harary, Bondy}.

In chemical graph theory, the graph invariants (that found some chemical applications in chemistry) are called \emph{topological indices}.
Long time ago, a pair of topological indices were appeared within the study of the dependence of total $\pi$-electron energy of molecular structures \cite{Gut-72,Gut-75}. Nowadays, the members of this pair are known as the \emph{first Zagreb} index, which is denoted by $M_1$, and the \emph{second Zagreb} index, which is denoted by $M_2$. For a (molecular) graph $G$, these Zagreb indices are defined as
$$M_{1}(G)=\sum_{v\in V(G)}(d_{v})^{2} \ \ \  \text{and }\ \ \ M_{2}(G)=\sum_{uv\in E(G)}d_{u}d_{v}\,.$$
These indices were given different names in the literature, such as the Zagreb Group indices \cite{Gut-75}, the Zagreb group parameters \cite{Balban-83} and the Zagreb indices \cite{Todeschini-00}. The Zagreb indices attracted much interest from mathematical chemists and mathematicians, and as a result a plethora of their mathematical properties were reported -- detail about the mathematical theory and applications of these indices can be found in the recent surveys \cite{Bojana-17,Bojana17-2,Gut-18,Ali-M-18,Ali-MATCH-19}, recent papers \cite{Moj-19,Eliasi-18,Ashrafi-19,Pei-18,Das-DML-19,Ali-AEJM-18,Ali-Das-2019,Javaid-19,Milov-18,Yurtas-19,Zhan-19} and related references listed therein.

Let $P:u_0u_1u_2\cdots u_r$ be a path of length $r\ge2$ in a graph. The vertices $u_0$ and $u_r$ are called end vertices of $P$. If $r\ge3$ then the vertices $u_1,u_2,\cdots, u_{r-1}$ are called internal vertices of $P$. A \emph{pendent path} in a graph is a path in which one of the end vertices is pendent and the other is branching, and all the internal vertices (if exist) have degree 2. An \emph{internal path} in a graph is a path in which both the end vertices are branching and all the internal vertices (if exist) have degree 2. A \emph{segment} of a tree $T$ is a non-trivial path $P'$ in $T$ with the property that neither of the end vertices of $P'$ has degree 2 and that all the internal vertices (if exist) of $P'$ have degree 2.

Denote by $\mathcal{CT}_{n,k}$ and $\mathcal{CT}^*_{n,b}$ the classes of all $n$-vertex chemical trees with $k$ segments and $b$ branching vertices, respectively, where $1\le k\le n-1$ and $0\le b\le \frac{n}{2}-1$. The solution of the problem of finding trees from the class $\mathcal{CT}_{n,k}$ or $\mathcal{CT}^*_{n,b}$, with the minimal first Zagreb index or minimal second Zagreb index follows directly from the main results of \cite{boj-15} or \cite{lin-14}. The main purpose of the present paper is to solve the following chemical extremal graph theoretical problem.


\begin{prob}\label{prb-A0}
Characterize all the trees attaining the maximal first Zagreb index or maximal second Zagreb index from the
class $\mathcal{CT}_{n,k}$ or $\mathcal{CT}^*_{n,b}$.

\end{prob}

Clearly, the classes $\mathcal{CT}_{n,1}$ and $\mathcal{CT}^*_{n,0}$ consist of only the path graph and the class $\mathcal{CT}_{n,2}$ is empty. It is mentioned in the papers \cite{boj-15,lin-14} that the $n$-vertex star graph is the unique tree with $n-1$ segments -- however, this is not the case because
every $n$-vertex tree containing no vertex of degree 2 has $n-1$ segments. Also, if $T\in\mathcal{CT}^*_{n,\frac{n}{2}-1}$ then $T$ consists of the vertices only of degrees 1 and 3, and hence $M_1(T)=5n-8$, $M_2(T)=6n-15$, where $n\ge4$.
Thus, we solve Problem \ref{prb-A0} under the constraints $3\le k\le n-1$ and $1\le b< \frac{n}{2}-1$.
Moreover, if $k=3$, 4, the solution of the problem of characterizing trees from the class $\mathcal{CT}_{n,k}$ with the maximal first Zagreb index or maximal second Zagreb index follows directly from Theorem 1 of \cite{lin-14} or Theorem 3.1 of \cite{boj-15}, respectively. However, for the sake of completeness, we state our main results, concerning segments, with the condition $3\le k\le n-1$ instead of $5\le k\le n-1$.

\section{Statements of the Main Results}\label{sec-3}

This section is concerned with the statements of our main results, which give the solution of Problem \ref{prb-A0}. In order to state the first two of these results, we need the following elementary lemma.

\begin{lem}\label{zm}
For any tree $T \in \mathcal{CT}_{n,k}$, with $3\le k\le n-1$, the following results hold.\\
a) $n_3=0$ if and only if $k\equiv 1$ (mod 3), $n_1=\frac{2k+4}{3}$, $n_2=n-k-1$ and $n_4=\frac{k-1}{3}$.\\
b) $n_3=1$ if and only if  $k\equiv 0$ (mod 3), $n_1=\frac{2k+3}{3}$, $n_2=n-k-1$  and $n_4=\frac{k-3}{3}$.\\
c) $n_3=2$ if and only if  $k\equiv 2$ (mod 3), $n_1=\frac{2k+2}{3}$, $n_2=n-k-1$  and $n_4=\frac{k-5}{3}$.

\end{lem}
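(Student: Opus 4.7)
The plan is to reduce the lemma to a small linear system in the degree counts $(n_1, n_2, n_3, n_4)$ and then read off the three cases by substituting $n_3 \in \{0,1,2\}$. I would begin by recording the two universal identities valid for any $n$-vertex tree, namely the vertex count $n_1+n_2+n_3+n_4 = n$ and the handshake identity $n_1+2n_2+3n_3+4n_4 = 2(n-1)$, the latter because a tree has $n-1$ edges and, by hypothesis, no vertex of degree exceeding $4$.

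The only non-routine ingredient is a segment-counting identity tailored to chemical trees. I would argue as follows: each pendent vertex is the endpoint of exactly one segment, while a branching vertex of degree $d \in \{3,4\}$ is the endpoint of exactly $d$ segments (one per incident edge), because each edge incident to a branching vertex belongs to a unique segment ending there. Counting pairs (segment, endpoint) in two ways gives $2k = n_1 + 3n_3 + 4n_4$. Combining this with the vertex count and handshake identity (or, conceptually, suppressing the degree-$2$ vertices to form a tree on the $n_1+n_3+n_4$ remaining vertices whose $k$ edges are exactly the segments of $T$) yields the equivalent form $n_1+n_3+n_4 = k+1$.

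With these three relations in hand, standard linear manipulation gives $n_2 = n-k-1$ (subtract $n_1+n_3+n_4 = k+1$ from the vertex count) and $n_1 = n_3 + 2n_4 + 2$ (subtract twice the vertex count from the handshake identity); substituting this expression for $n_1$ back into $n_1+n_3+n_4=k+1$ produces the Diophantine relation $2n_3 + 3n_4 = k-1$. For the forward (``only if'') direction of each of (a)--(c), setting $n_3 = 0, 1, 2$ respectively forces $k \equiv 1, 0, 2 \pmod 3$, determines $n_4 = (k-1-2n_3)/3$, and then gives the stated value of $n_1 = n_3 + 2n_4 + 2$. For the converse (``if'') direction in each part, inserting the stated values of $n_1, n_2, n_4$ into the vertex count $n_1+n_2+n_3+n_4=n$ isolates $n_3$ and forces the claimed value. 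The main (and mild) obstacle is simply establishing the segment-counting identity $n_1+n_3+n_4 = k+1$; once it is in place, the remainder is elementary linear algebra.
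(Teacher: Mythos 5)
Your proof is correct and follows essentially the same route as the paper: both reduce the lemma to the linear system formed by the vertex count, the handshake identity, and the segment relation $n_1+n_3+n_4=k+1$ (equivalently $n_2=n-k-1$), then solve for $n_1$ and $n_4$ in terms of $n_3$ and $k$ and read off the congruence $k\equiv 2n_3+1 \pmod 3$. The only difference is that you actually establish the segment relation via an endpoint-counting (or degree-$2$ suppression) argument, whereas the paper simply cites it from Lin's earlier work, so your version is slightly more self-contained.
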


\begin{proof}
From the well known identities
\begin{equation}\label{eq-aa6}
n=n_1+n_2+n_3+n_4
\end{equation}
and
\begin{equation}\label{eq-aa7}
n_1+2n_2+3n_3+4n_4=2(n-1),
\end{equation}
it follows that
\begin{equation}\label{eq-aa4}
n_1=n_3+2n_4+2.
\end{equation}
By using \eqref{eq-aa4} in the equation $k=(n_1+n_3+n_4)-1$, we get
\begin{equation}\label{eq-aa5}
k\equiv 2n_3+1 \text{ \ (mod 3)}.
\end{equation}
Now, by using the identity $n_2=n-k-1$ (see \cite{lin-14} for details) in \eqref{eq-aa7}, we have
\begin{equation}\label{eq-aa8}
n_1+4n_4=2k-3n_3\,.
\end{equation}
By solving \eqref{eq-aa4} and \eqref{eq-aa8} for the unknowns $n_1$ and $n_4$, we get
\begin{equation}\label{eq-aa9}
n_1=\frac{2k-n_3+4}{3}
\end{equation}
and
\begin{equation}\label{eq-aa10}
n_4=\frac{k-2n_3-1}{3}\,.
\end{equation}
From
\eqref{eq-aa5},
\eqref{eq-aa9} and
\eqref{eq-aa10}, the desired results follow.

\end{proof}

Let $\mathcal{CT}_{0}(n,k)$, $\mathcal{CT}_{1}(n,k)$ and $\mathcal{CT}_{2}(n,k)$ be the subclasses of $\mathcal{CT}_{n,k}$ consisting of the trees that contain no vertex of degree 3, contain one vertex of degree 3 and contain two vertices of degree 3, respectively. Then, by Lemma \ref{zm}, every member of $\mathcal{CT}_{0}(n,k)$, $\mathcal{CT}_{1}(n,k)$ or $\mathcal{CT}_{2}(n,k)$ satisfies $k\equiv 1$ (mod 3), $k\equiv 0$ (mod 3) or $k\equiv 2$ (mod 3), respectively, and also that member has the degree sequence   $$(\underbrace{4,4,...,4}_{\frac{k-1}{3}},\underbrace{2,2,...,2}_{n-k-1},\underbrace{1,1,...,1}_{\frac{2k+4}{3}}),$$ $$(\underbrace{4,4,...,4}_{\frac{k-3}{3}},3,\underbrace{2,2,...,2}_{n-k-1},\underbrace{1,1,...,1}_{\frac{2k+3}{3}})$$ or $$(\underbrace{4,4,...,4}_{\frac{k-5}{3}},3,3,\underbrace{2,2,...,2}_{n-k-1},\underbrace{1,1,...,1}_{\frac{2k+2}{3}}),$$ respectively.

\begin{thm}\label{thm-1}
If $3\leq k \leq n-1$ and $CT\in \mathcal{CT}_{n,k}$, then
\[
M_{1}(CT)
\leq
\begin{cases}
4n+2k-10 &\text{if\  $k\equiv 0$ (mod 3), }\\
4n+2k-8 &\text{if\   $k\equiv 1$ (mod 3), }\\
4n+2k-12 &\text{if\  $k\equiv 2$ (mod 3).}\\
\end{cases}
\]
The equality holds if and only if  $CT\in \mathcal{CT}_{1}(n,k)$ for $k\equiv 0$ (mod 3), $CT \in \mathcal{CT}_{0}(n,k)$ for $k\equiv 1$ (mod 3), and $CT \in \mathcal{CT}_{2}(n,k)$ for $k\equiv 2$ (mod 3).

\end{thm}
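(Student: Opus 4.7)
The plan is to observe that the first Zagreb index of any chemical tree $T\in\mathcal{CT}_{n,k}$ is, in fact, an affine function of the single parameter $n_3$ once $n$ and $k$ are fixed. From the proof of Lemma \ref{zm} we already have $n_2=n-k-1$ together with the explicit formulas
$$n_1=\frac{2k-n_3+4}{3},\qquad n_4=\frac{k-2n_3-1}{3},$$
valid for every $T\in\mathcal{CT}_{n,k}$. Substituting these three expressions into the definition
$$M_1(T)=n_1+4n_2+9n_3+16n_4$$
and simplifying yields
$$M_1(T)=4n+2k-2n_3-8.$$
Thus, for fixed $n$ and $k$, the quantity $M_1(T)$ is a strictly decreasing function of $n_3$, so maximising $M_1$ is equivalent to minimising $n_3$ over the integers compatible with the congruence \eqref{eq-aa5}.

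Next I would read off from the congruence $k\equiv 2n_3+1\pmod 3$ the smallest admissible nonnegative value of $n_3$ in each residue class of $k$ modulo $3$: one gets $n_3=0$ when $k\equiv 1\pmod 3$, $n_3=1$ when $k\equiv 0\pmod 3$, and $n_3=2$ when $k\equiv 2\pmod 3$. Plugging each of these back into $M_1(T)=4n+2k-2n_3-8$ produces the three upper bounds $4n+2k-8$, $4n+2k-10$ and $4n+2k-12$ claimed in the theorem. Since $M_1$ is strictly decreasing in $n_3$, equality holds in the respective bound precisely when $T$ realises the minimum admissible value of $n_3$, i.e.\ when $T$ belongs to $\mathcal{CT}_0(n,k)$, $\mathcal{CT}_1(n,k)$ or $\mathcal{CT}_2(n,k)$, respectively.

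The only non-computational point to address is that each of these extremal subclasses is nonempty whenever $3\le k\le n-1$, so that the bound is actually attained; this follows from a direct construction (for instance, by taking the relevant numbers of degree-$4$, degree-$3$, degree-$2$, and pendent vertices from Lemma \ref{zm} and assembling them into a caterpillar-type chemical tree with exactly $k$ segments). I expect this existence check to be the only mildly tedious step; the algebraic heart of the argument is the single linear identity $M_1(T)=4n+2k-2n_3-8$, which reduces the optimisation to finding the least nonnegative integer in a fixed residue class mod $3$.
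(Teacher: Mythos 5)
Your proof is correct, and it takes a genuinely different route from the paper's. The paper proves the bound by a local exchange argument (Lemma~\ref{sm3b}): assuming the maximizer had three vertices of degree $3$, it rewires two edges to produce another tree in $\mathcal{CT}_{n,k}$ with strictly larger $M_1$, so the maximizer has $n_3\le 2$; Lemma~\ref{zm} then pins down its degree sequence and hence its $M_1$-value. You instead substitute \eqref{eq-aa9}, \eqref{eq-aa10} and $n_2=n-k-1$ into $M_1=n_1+4n_2+9n_3+16n_4$ to get the closed form $M_1(T)=4n+2k-2n_3-8$ valid for \emph{every} $T\in\mathcal{CT}_{n,k}$, and read the answer off the congruence \eqref{eq-aa5}; I checked the algebra and the three residue cases, and they are right. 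Your approach is more transparent and in fact stronger (it linearly orders all of $\mathcal{CT}_{n,k}$ by $n_3$), but it shifts the burden onto the realizability of the minimal admissible $n_3$, which the paper gets for free because its maximizer itself realizes that value; the paper's exchange lemma also does double duty in the $M_2$ theorem. Your existence step can be closed cleanly without a caterpillar construction: since $k=n_1+n_3+n_4-1$ is determined by the degree sequence alone, \emph{any} tree realizing the degree sequence of Lemma~\ref{zm} automatically lies in $\mathcal{CT}_{n,k}$, and a positive integer sequence summing to $2(n-1)$ is always realizable as a tree degree sequence; you should say this explicitly rather than leave it as a ``mildly tedious step.''
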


Let $\mathcal{CT}_{0}'(n,k)$, $\mathcal{CT}_{1}'(n,k)$ and $\mathcal{CT}_{2}'(n,k)$ be the subclasses of $\mathcal{CT}_{0}(n,k)$, $\mathcal{CT}_{1}(n,k)$ and $\mathcal{CT}_{2}(n,k)$, respectively, consisting of the trees that satisfy the following properties:\\
$\bullet$ every internal path (if exists) has length $1$,\\
$\bullet$ if there is at least one starlike pendent vertex then there is no pendent path of length greater than $2$,\\
$\bullet$ every vertex of degree $3$ (if exists) does not have more than one branching neighbor,\\
$\bullet$ if there is a pendent neighbor of a vertex of degree $4$ then there is no vertex of degree $3$ having any neighbor of degree $2$,\\
$\bullet$ if $n_4>0$ then the graph induced by the vertices of degree 4 is a tree.

\begin{thm} \label{thm-2}
If $CT\in \mathcal{CT}_{n,k},$ with $3\leq k \leq n-1$, then it holds that
\[
M_{2}(CT)
\leq
\begin{cases}
6n+2k-24 &\text{\ if $n < \frac{5k}{3} + 1$ and \ $k\equiv 0$ (mod 3),}\\[2mm]
\frac{30n-14k-87}{3} &\text{\ if $n = \frac{5k}{3} + 1$ and \ $k\equiv 0$ (mod 3),}\\[2mm]
\frac{12n+16k-66}{3}&\text{\ if $n > \frac{5k}{3} + 1$ and \ $k\equiv 0$ (mod 3),}\\[2mm]
6n+2k-22&\text{\ if $n < \frac{5k+7}{3}$ and $k\equiv 1$ (mod 3), }\\[2mm]
\frac{12n+16k-52}{3}&\text{\ if $n \ge \frac{5k+7}{3}$ and $k\equiv 1$ (mod 3),}\\[2mm]
6n+2k-26&\text{\ if $n< \frac{5k-4}{3}$, \ $k\equiv 2$ (mod 3) and $k\ne 5$,}\\[2mm]
\frac{15n+11k-85}{3}&\text{\ if $\frac{5k-4}{3}\leq n \leq \frac{5k+2}{3}$, $k\equiv 2$ (mod 3) and $k\ne 5$,}\\[2mm]
\frac{12n+16k-80}{3}&\text{\ if $n > \frac{5k+2}{3}$,  $k\equiv 2$ (mod 3) and $k\ne 5$,}\\[2mm]
5n-9&\text{\ if $n < 10$ and $k = 5$,}\\[2mm]
4n+1&\text{\ if $n\ge 10$ and  $k = 5$.}
\end{cases}
\]
with equality if and only if  $CT\in \mathcal{CT^{\prime}}_{1}(n,k)$ for $k\equiv 0$ (mod 3), $CT \in \mathcal{CT^{\prime}}_{0}(n,k)$ for $k\equiv 1$ (mod 3), and $CT \in \mathcal{CT^{\prime}}_{2}(n,k)$ for $k\equiv 2$ (mod 3).

\end{thm}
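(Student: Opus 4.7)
The plan is to fix the degree sequence via Lemma \ref{zm}, recast $M_2$ as a linear function of edge-type multiplicities, and solve the resulting small linear program. For each residue of $k$ modulo $3$, Lemma \ref{zm} determines $(n_1,n_2,n_3,n_4)$ uniquely. Let $m_{ij}$ be the number of edges whose endpoints have degrees $i$ and $j$; then $M_2(CT) = \sum_{i\le j} i\,j\,m_{ij}$. The four degree-sum identities $\sum_{j}(1+\delta_{ij})\,m_{ij} = i\,n_i$ together with $\sum m_{ij} = n-1$ produce three independent linear relations among the $m_{ij}$'s, reducing the problem to a two- or three-parameter family.

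Using these relations I would eliminate $m_{12}$, $m_{22}$, and (when $n_4 > 0$) $m_{14}$, rewriting
\[
M_2 \;=\; C(n,k) \;+\; 2\,m_{24} \;+\; 8\,m_{44} \;+\; (\text{additional positive-coefficient terms when } n_3>0),
\]
so that maximizing $M_2$ amounts to pushing the surviving edge-counts as high as possible subject to (i) nonnegativity of the eliminated variables and (ii) the structural ceilings $m_{44}\le n_4-1$ (attained exactly when the degree-$4$ vertices induce a subtree), $m_{33}\le\lfloor n_3/2\rfloor$, and $m_{34}\le\min(n_3,n_4)$. The resulting bounded LP is tiny, so I would solve it by vertex enumeration / case analysis. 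The thresholds of the form $\frac{5k+c}{3}$ in the piecewise formula are precisely the values of $n$ at which two of these inequalities become simultaneously binding; substituting the extremal $(m_{ij})$ back into $M_2$ reproduces each stated bound by direct algebra.

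For the equality cases I would translate the optimal edge-count profile back into the five structural conditions defining $\mathcal{CT}'_i(n,k)$: $m_{44}=n_4-1$ is condition 5; absence of length-$\ge 2$ internal paths (condition 1) is equivalent to maximality of $m_{jj'}$ for $j,j'\in\{3,4\}$; the pendent-path rule (condition 2) captures the $m_{14}$ versus $m_{12}/m_{22}$ trade-off forced in each regime; and conditions 3--4 encode the local neighborhoods around the (one or two) degree-$3$ vertices, which are ruled out by short edge-swap arguments. The special value $k=5$ has $n_4=0$, so the general framework degenerates; I would treat it directly by enumerating $m_{33}\in\{0,1\}$ (whether or not the two degree-$3$ vertices are adjacent) and optimizing $m_{13}$ in each subcase, recovering the $5n-9$ and $4n+1$ bounds.

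The main obstacle I expect is this last translation step: proving that every admissible violation of a structural condition admits a local $M_2$-strictly-increasing transformation. The required operations (contracting an internal $2$-path, reattaching a long pendent path as a starlike pendent, redirecting a branching neighbor of a degree-$3$ vertex, reconnecting disjoint degree-$4$ components) each alter several $m_{ij}$'s simultaneously, so the net change in $M_2$ must be computed carefully case by case, and the delicate interaction between conditions 3 and 4 needs particular attention. A secondary difficulty is the bookkeeping: three residue classes, each with two or three $n$-regimes, together with the isolated $k=5$ case, yields ten pieces to verify, and one must additionally check continuity of the piecewise bound at each regime boundary so that the statement is well-posed.
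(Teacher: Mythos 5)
Your overall strategy --- fix the degree sequence, express $M_2$ as a linear function of the edge-type counts $m_{ij}$ via the degree-sum identities, and then optimize subject to structural ceilings established by local edge swaps --- is essentially the route the paper takes (its system \eqref{Eq557b} is your identity $\sum_j(1+\delta_{ij})m_{ij}=i\,n_i$, and your reduction $M_2=C(n,k)+2m_{24}+8m_{44}$ in the $n_3=0$ case correctly reproduces the two regimes of the $k\equiv 1\pmod 3$ branch). However, there is a genuine gap at the very first step: Lemma \ref{zm} does \emph{not} determine $(n_1,n_2,n_3,n_4)$ from the residue of $k$ modulo $3$. It only asserts the equivalence between $n_3\in\{0,1,2\}$ and the congruence class of $k$; the underlying relation is $k\equiv 2n_3+1\pmod 3$, so $n_3$ is determined only modulo $3$. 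For example, with $k=7$ a tree may have $(n_3,n_4)=(0,2)$ or $(n_3,n_4)=(3,0)$, and these give different $n_1$ and hence entirely different feasible sets for your linear program. The paper closes this gap with Lemma \ref{sm3b}: a three-vertices-of-degree-$3$ configuration admits an edge swap (detaching two neighbors of one degree-$3$ vertex and reattaching them to the other two) that preserves membership in $\mathcal{CT}_{n,k}$ and strictly increases $M_2$, so the extremal tree has $n_3\le 2$. Without some such argument your plan cannot start, because the degree sequence you feed into the LP is not yet known to be the extremal one.

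A secondary caution: an LP relaxation over the $m_{ij}$ with the ceilings you list bounds $M_2$ from above, but you must still show (i) the LP optimum is attained by an actual chemical tree with $k$ segments in every regime (this is what the five defining conditions of $\mathcal{CT}'_i(n,k)$ encode), and (ii) every tree failing one of those conditions is strictly non-optimal, which is needed for the ``only if'' direction of the equality statement. You acknowledge this, and the required swaps are exactly the paper's Lemmas \ref{sm1}--\ref{sm3-new}; but note that in the borderline regime $n=\frac{5k}{3}+1$ (and its analogue for $k\equiv 2$) the optimum forces fractional-looking trade-offs such as $x_{1,3}=x_{2,3}=1$, so the LP vertex enumeration must be done with the integrality and realizability constraints in view, not just nonnegativity.
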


Since $n_2=n-k-1$ (see \cite{lin-14} for details), we remark that
the solution of the problem of finding trees from the class of all $n$-vertex chemical trees having $n_2$ number of vertices of degree 2, with the maximal first Zagreb index or maximal second Zagreb index, follows from Theorem \ref{thm-1} or Theorem \ref{thm-2}, respectively, where $0\le n_2 \le n-4$.

For $1\leq b < \frac{n-2}{3}$ and for $\frac{n-2}{3}\le b< \frac{n}{2}-1$,
denote by $\mathcal{BT}_{1}(n,b)$ and by $\mathcal{BT}_{2}(n,b)$ the subclasses of $\mathcal{CT}^*_{n,b}$ consisting of the trees with the degree sequences
$$(\underbrace{4,4,...,4}_{b},\underbrace{2,2,...,2}_{n-3b-2},\underbrace{1,1,...,1}_{2b+2}\,)$$
and
$$(\underbrace{4,4,...,4}_{n-2b-2},\underbrace{3,3,...,3}_{3b-n+2},\underbrace{1,1,...,1}_{n-b}\,)$$
respectively.

\begin{thm}\label{thm-3}
If $BT\in \mathcal{CT}^*_{n,b}$ then
\[
M_{1}(BT)
\leq
\begin{cases}
2(2n+3b-3) &\text{if\  $1\leq b < \frac{n-2}{3}$, }\\[3mm]
2(4n-3b-7) &\text{if\  $\frac{n-2}{3}\le b < \frac{n}{2}-1$.}\\
\end{cases}
\]
The equality sign in the inequality $M_{1}(BT)\le 2(2n+3b-3)$ holds if and only if $BT \in \mathcal{BT}_{1}(n,b)$ for $1\leq b < \frac{n-2}{3}$ and the equality sign in the inequality $M_{1}(BT)\le 2(4n-3b-7)$ holds if and only if $BT\in \mathcal{BT}_{2}(n,b)$ for $\frac{n-2}{3}\le b< \frac{n}{2}-1$.

\end{thm}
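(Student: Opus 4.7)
The plan is to reduce everything to degree-sequence counting, since $M_1$ depends only on the degree sequence. Write $n_i$ for the number of vertices of degree $i$ in $BT$; since $BT$ is a chemical tree, $i \in \{1,2,3,4\}$. The two standard identities
\begin{equation*}
n_1 + n_2 + n_3 + n_4 = n, \qquad n_1 + 2n_2 + 3n_3 + 4n_4 = 2(n-1),
\end{equation*}
together with the branching-vertex constraint $n_3 + n_4 = b$, yield $n_1 = b + n_4 + 2$ and $n_2 = n - 2b - n_4 - 2$. Substituting these into $M_1(BT) = n_1 + 4n_2 + 9n_3 + 16n_4$ and simplifying, I expect to arrive at a clean expression of the form
\begin{equation*}
M_1(BT) \;=\; 4n + 2b + 4n_4 - 6,
\end{equation*}
so that for fixed $n$ and $b$, maximizing $M_1$ amounts to maximizing $n_4$ alone.

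Next I would read off the admissible range of $n_4$. From $n_3 = b - n_4 \ge 0$ and $n_2 = n - 2b - n_4 - 2 \ge 0$ one gets
\begin{equation*}
0 \le n_4 \le \min\bigl(b,\; n - 2b - 2\bigr).
\end{equation*}
The threshold between the two upper bounds occurs precisely at $b = (n-2)/3$, which matches the case split in the statement. In the range $1 \le b < (n-2)/3$ the binding bound is $n_4 \le b$, giving $M_1 \le 4n + 6b - 6 = 2(2n + 3b - 3)$; in the range $(n-2)/3 \le b < n/2 - 1$ the binding bound is $n_4 \le n - 2b - 2$, giving $M_1 \le 8n - 6b - 14 = 2(4n - 3b - 7)$. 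The equality case in each inequality forces the degree sequence to be exactly the one prescribed for $\mathcal{BT}_1(n,b)$ or $\mathcal{BT}_2(n,b)$, respectively.

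Finally, I need to check that trees realizing these extremal degree sequences actually exist, so that both inequalities are sharp. This is the only step with any content beyond routine algebra, but it is mild: for any nonnegative integer sequence of length $n$ summing to $2(n-1)$ with all entries positive, a tree with that degree sequence exists (one can, for example, inductively attach the largest-degree unused vertex to a leaf of the currently built tree, or invoke the standard Erd\H{o}s--Gallai-type criterion for trees). For $\mathcal{BT}_1(n,b)$ the sequence $(4^b, 2^{n-3b-2}, 1^{2b+2})$ and for $\mathcal{BT}_2(n,b)$ the sequence $(4^{n-2b-2}, 3^{3b-n+2}, 1^{n-b})$ both satisfy these conditions under the stated hypotheses on $b$, so the extremal trees exist.

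The whole argument is genuinely just the observation that $M_1$ is an affine function of $n_4$ once $n$ and $b$ are fixed. I do not expect any serious obstacle; the only mild care is ensuring the two cases meet correctly at $b = (n-2)/3$ (both formulas give $2(2n + 3b - 3) = 2(4n - 3b - 7)$ there when $3b = n-2$, which serves as a useful sanity check).
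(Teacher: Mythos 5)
Your proof is correct, but it takes a genuinely different and more direct route than the paper. The paper first proves, via an edge-switching transformation (its Lemma \ref{lem-bm4}), that a maximizing tree cannot contain vertices of degree $2$ and degree $3$ simultaneously; it then needs a separate counting lemma to pin down the degree sequence in each of the two resulting regimes, and an inductive argument (Lemma \ref{zb-A2}) to decide which regime occurs for which range of $b$. You instead observe that the three linear relations $\sum_i n_i = n$, $\sum_i i\,n_i = 2(n-1)$, $n_3+n_4=b$ leave a single free parameter and that $M_1 = \sum_i i^2 n_i = 4n+2b+4n_4-6$ is increasing in $n_4$, so the whole theorem reduces to $n_4 \le \min(b,\,n-2b-2)$ coming from $n_3\ge 0$ and $n_2\ge 0$. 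This is purely arithmetic: it bounds $M_1$ for \emph{every} tree in $\mathcal{CT}^*_{n,b}$ at once rather than reasoning about a hypothetical extremal tree, it makes the case split at $b=(n-2)/3$ transparent, and it replaces the paper's transformation and induction with the realizability criterion for tree degree sequences (positive integers summing to $2(n-1)$), which you correctly invoke for sharpness. The equality analysis is also immediate in your setup, since equality forces $n_4$ to equal the binding bound and hence fixes the entire degree sequence, which is exactly how $\mathcal{BT}_1(n,b)$ and $\mathcal{BT}_2(n,b)$ are defined. I see no gap; the only point worth stating explicitly in a final write-up is that the degree sequences you exhibit have maximum entry at most $4$ and exactly $b$ entries exceeding $2$, so the realizing trees indeed lie in $\mathcal{CT}^*_{n,b}$.
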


For $1\leq b < \frac{n-2}{3}$ and for $\frac{n-2}{3}\le b< \frac{n}{2}-1$, denote by  $\mathcal{BT'}_{1}(n,b)$ and by $\mathcal{BT'}_{2}(n,b)$, the subclasses of $\mathcal{BT}_{1}(n,b)$ and $\mathcal{BT}_{2}(n,b)$, respectively, consisting of the trees that satisfy the following constraints:\\
$\bullet$ every internal path (if exists) has length 1,\\
$\bullet$ if there is a pendent vertex adjacent to a vertex of degree 4, then there is no adjacent vertices of degree 3,\\
$\bullet$ if there is a pendent vertex adjacent to a branching vertex, then there is no pendent path of length greater than 2,\\
$\bullet$ every vertex of degree 3 (if exists) has at most one neighbor of degree 4,\\
$\bullet$ $n_4>0$ and the graph induced by the vertices of degree 4 is a tree.

\begin{thm}\label{thm-4}
If $BT\in \mathcal{CT}^*_{n,b},$ where $1\leq b < \frac{n}{2}-1$, then
\[
M_{2}(BT)
\leq
\begin{cases}
4n+16b-12 &\text{if $1\leq b \le \frac{n-4}{5}$,}\\[2mm]
6n+6b-20 &\text{if $\frac{n-4}{5}< b < \frac{n-2}{3}$,}\\[2mm]
10n-6b-28 &\text{if $\frac{n-2}{3} \leq b < \frac{3n-4}{7}$,}\\[2mm]
16n-20b-36 &\text{if $\frac{3n-4}{7} \leq b < \frac{n}{2}-1$.}
\end{cases}
\]
The equality holds if and only if $BT \in \mathcal{BT'}_{1}(n,b)$ for $1\leq b < \frac{n-2}{3}$, and $BT\in \mathcal{BT'}_{2}(n,b)$ for $\frac{n-2}{3}\le b< \frac{n}{2}-1$.

\end{thm}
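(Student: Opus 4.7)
The plan is to reduce the extremal problem to a short linear program in the edge-type counts $m_{ij}$ (the number of edges joining a vertex of degree $i$ with a vertex of degree $j$), and then translate the optimum back into structural conditions on the tree. The argument proceeds in two stages: first, reduce to trees whose degree sequence is that of $\mathcal{BT}_1(n,b)$ or $\mathcal{BT}_2(n,b)$; second, within each of these two degree sequences, maximize $M_2$ over the choice of edge-type counts.

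For the first stage, I would mimic the local transformations used in the proof of Theorem~\ref{thm-3}: given any tree $BT\in\mathcal{CT}^*_{n,b}$, apply switching operations that preserve both $n$ and $b$ but convert a vertex of degree $2$ (when $b<(n-2)/3$) or a superfluous vertex of degree $3$ (when $b\ge(n-2)/3$) into a vertex of degree $4$, verifying in each step that $M_2$ does not decrease. Typical candidate transformations---shifting a pendent edge from a path of degree-$2$ vertices onto a branching vertex, or detaching a pendent subtree from a degree-$3$ vertex and re-attaching it to a degree-$4$ vertex---work, provided the total change $\sum d_u d_v$ is non-negative. Iterating, the extremal tree has the $\mathcal{BT}_1$ degree sequence when $b<(n-2)/3$ and the $\mathcal{BT}_2$ degree sequence otherwise.

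For the second stage, expand $M_2=\sum_{i\le j}ij\cdot m_{ij}$ and use the identities $2m_{ii}+\sum_{j\ne i}m_{ij}=i\,n_i$ together with $\sum_{i\le j}m_{ij}=n-1$. A direct computation with only two free parameters gives
\begin{equation*}
M_2(BT)=4n+12b-8-2m_{14}+4m_{44}\quad\text{for}\ BT\in\mathcal{BT}_1(n,b),
\end{equation*}
and
\begin{equation*}
M_2(BT)=15n-18b-33-2m_{14}+m_{44}\quad\text{for}\ BT\in\mathcal{BT}_2(n,b).
\end{equation*}
Thus $M_2$ is maximized by maximizing $m_{44}$ and minimizing $m_{14}$ subject to the non-negativity of $m_{22},m_{24},m_{33},m_{34}$. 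The maximum of $m_{44}$ is $n_4-1$, attained when the degree-$4$ vertices induce a tree; setting $m_{44}=n_4-1$ and analyzing the feasible range of $m_{14}$ produces the thresholds $b=(n-4)/5$, $b=(n-2)/3$, and $b=(3n-4)/7$, and a short case check reproduces the four bounds of the theorem.

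Finally, the LP optimum is translated back into graph-theoretic conditions and matched to the definitions of $\mathcal{BT}'_1(n,b)$ and $\mathcal{BT}'_2(n,b)$: the equation $m_{44}=n_4-1$ is the ``induced tree on degree-$4$ vertices'' bullet; minimizing $m_{14}$ corresponds to the bullets that forbid a pendent neighbor of a degree-$4$ vertex whenever possible and bound the lengths of pendent and internal paths; and minimizing $m_{34}$ (after $m_{44}$ is fixed) gives the bullets restricting the adjacencies of degree-$3$ vertices. The main obstacle is the first stage: designing switching moves that simultaneously preserve $b$, affect only a local configuration, and strictly increase $M_2$ requires separate sub-cases according to whether the tree currently has $n_3=0,1$ or $2$, and according to the relative positions of the degree-$2$, degree-$3$, and degree-$4$ vertices in the affected configuration. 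A secondary difficulty is verifying that each LP-optimal pair $(m_{14},m_{44})$ is actually realized by a tree satisfying all five structural bullets simultaneously, which requires an explicit construction in each of the four regimes.
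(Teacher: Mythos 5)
Your two-stage plan is sound and your linear-programming computations check out: with the $\mathcal{BT}_1(n,b)$ degree sequence one indeed gets $M_2=4n+12b-8-2x_{1,4}+4x_{4,4}$, with the $\mathcal{BT}_2(n,b)$ degree sequence $M_2=15n-18b-33-2x_{1,4}+x_{4,4}$, and optimizing subject to $x_{4,4}\le n_4-1$ and non-negativity of the remaining edge counts reproduces all four bounds and the thresholds $b=\frac{n-4}{5}$, $\frac{n-2}{3}$, $\frac{3n-4}{7}$. Your first stage is essentially the paper's route: the paper proves (Lemma \ref{lem-bm4}) that a maximizer cannot contain vertices of degree $2$ and $3$ simultaneously, via a single switch that deletes two edges at a degree-$3$ vertex $z$ and reattaches them at a degree-$2$ vertex $v$ (so $z$ becomes pendent and $v$ becomes degree $4$, preserving $b$); combined with the counting identities and an induction (Lemmas \ref{zb-A1}, \ref{zb-A2}) this pins down the degree sequence. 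So the ``main obstacle'' you anticipate is in fact one clean move rather than a long case analysis; note, though, that your description of the move is slightly off --- one cannot convert a degree-$2$ vertex into a degree-$4$ vertex in isolation without changing $b$; the two conversions must happen simultaneously. Where you genuinely diverge is the second stage: the paper does not set up an LP but instead proves structural lemmas about the maximizer (no internal path of length $>1$, no pendent path of length $>2$ when a starlike pendent vertex exists, restrictions on neighbors of degree-$3$ vertices, degree-$4$ vertices inducing a tree --- Lemmas \ref{lem-bm1}--\ref{lem-bm6a}), each by an explicit $M_2$-increasing switch, and these lemmas force the exact values of all $x_{i,j}$ in each regime. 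Your LP gives the inequality more directly and with less case work, since every tree with the given degree sequence satisfies the linear constraints; what it does not give for free is the ``if and only if'' clause. For that you must still show that a tree attains the LP optimum precisely when it satisfies the five bullets defining $\mathcal{BT}'_1(n,b)$ and $\mathcal{BT}'_2(n,b)$, in both directions, and also that any tree whose degree sequence is \emph{not} the extremal one falls strictly below the bound (which follows because your stage-1 switches are strict). This residual structural work is exactly the content of the paper's Lemmas \ref{lem-bm1}--\ref{lem-bm6a}, so the LP reorganization shortens the inequality but does not eliminate the equality analysis you correctly flag as remaining.
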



\section{Proofs of Theorems \ref{thm-1} and \ref{thm-2}}\label{sec-4}

Let $CT^{1}_{max}$ (respectively $CT^{2}_{max}$) be the tree with the maximal first Zagreb index (respectively, second Zagreb index) among all the members of the class $\mathcal{CT}_{n,k}$ where $3\le k\le n-1$. In order to prove Theorems \ref{thm-1} and \ref{thm-2}, we first establish some structural properties of the trees $CT^{1}_{max}$ and $CT^{2}_{max}$.

\begin{lem}\label{sm3b}
The tree $CT^{1}_{max}\in \mathcal{CT}_{n,k}$ (respectively $CT^{2}_{max}\in \mathcal{CT}_{n,k})$ contains at most two vertices of degree 3 where $3\le k\le n-1$.

\end{lem}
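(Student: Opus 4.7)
\medskip

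\noindent\textbf{Proof plan.} The argument splits into the $M_1$ case and the $M_2$ case; the former reduces to a one-variable computation while the latter requires a local transformation.

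For $M_1$, observe that the identities derived inside the proof of Lemma~\ref{zm} -- namely $n_1=(2k-n_3+4)/3$, $n_2=n-k-1$ and $n_4=(k-2n_3-1)/3$ -- only invoke the general relations $n_1+n_2+n_3+n_4=n$, $n_1+2n_2+3n_3+4n_4=2(n-1)$, $n_2=n-k-1$ and $n_1+n_3+n_4=k+1$, so they hold for every chemical tree in $\mathcal{CT}_{n,k}$ regardless of how large $n_3$ is. Substituting them into $M_1=n_1+4n_2+9n_3+16n_4$ produces
\[
M_1(T) = 4n + 2k - 8 - 2n_3,
\]
which is strictly decreasing in $n_3$. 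Hence $CT^{1}_{max}$ minimises $n_3$ subject to the congruence $k\equiv 2n_3+1\pmod 3$, and that minimum automatically lies in $\{0,1,2\}$.

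For $M_2$, suppose for contradiction that $T:=CT^{2}_{max}$ has three distinct degree-$3$ vertices $v_1,v_2,v_3$. In any tree at most one of three given vertices can lie on the path between the other two, so after relabelling we may assume that $v_1$ and $v_2$ belong to the same connected component of $T-v_3$. Let $a,b,c$ denote the neighbours of $v_3$ in $T$, with $a$ chosen on the (common) $v_3$-to-$v_1$ and $v_3$-to-$v_2$ path. Form $T'$ from $T$ by deleting the two edges $v_3b,\,v_3c$ and inserting the two edges $v_1b,\,v_2c$. The deletions split $T$ into three components, and because $v_1,v_2$ both lie in the component that contains $v_3$, the insertions merge the pieces back into a single tree on $V(T)$. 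In $T'$ the degrees of $v_1,v_2$ rise to $4$, that of $v_3$ falls to $1$, and every other degree is unchanged; hence $T'$ is a chemical tree with $\Delta n_2=0$, so $T'\in\mathcal{CT}_{n,k}$, while $(\Delta n_1,\Delta n_3,\Delta n_4)=(+1,-3,+2)$.

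The main obstacle is to verify the strict inequality $M_2(T')>M_2(T)$. Only edges incident to $v_1,v_2,v_3$ are affected by the transformation, and in the generic sub-case in which $v_1,v_2,v_3$ are pairwise non-adjacent a direct accounting of the removed and inserted edges yields
\[
\Delta M_2 = \sum_{w\in N_T(v_1)}d_w + \sum_{w\in N_T(v_2)}d_w - 2d_a + d_b + d_c.
\]
The chemical bound $d_a\le 4$ together with $d_b,d_c\ge 1$ controls the negative term, and the hypothesis $n_3\ge 3$ forces $n\ge 5$, so neither $v_1$ nor $v_2$ can have all three of its neighbours pendent; consequently $\sum_{w\in N_T(v_i)}d_w\ge 1+1+2=4$ for $i=1,2$. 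Assembling these estimates gives $\Delta M_2\ge 4+4-8+1+1=2>0$. The (finitely many) sub-cases in which $a=v_1$ or $a=v_2$ require only a slight rewrite of the formula but yield the same positive lower bound by the same reasoning. The resulting contradiction with the maximality of $M_2(T)$ completes the argument.
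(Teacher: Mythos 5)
Your $M_{2}$ argument is essentially the paper's own proof. The paper takes three degree-$3$ vertices $u$, $v$, $w$ with $v$ on the $u$--$w$ path, deletes the two off-path edges at $w$, reattaches their ends to $u$ and $v$, arrives at exactly your difference formula (in its notation, $M_2(T')-M_2(T)=\sum_{x\in N(u)}d_x+\sum_{y\in N(v)}d_y+d_{w_1}+d_{w_2}-2d_{w_3}$), and closes with the same kind of degree estimates ($\ge 4$, $\ge 5$, $d_{w_3}\le 4$ versus your $\ge4$, $\ge4$, $d_b+d_c\ge2$). Your configuration --- choose $v_3$ so that it does not separate $v_1$ from $v_2$ and detach the two neighbours of $v_3$ pointing away from them --- is the same switch up to relabelling, and, like the paper, you leave the degenerate adjacency sub-cases to a routine check. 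Your observation that $n_3\ge3$ rules out a degree-$3$ vertex with three pendent neighbours correctly supplies the needed lower bound.

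The $M_{1}$ half is where you genuinely diverge: the paper applies the same edge switch and simply notes that $M_1$ strictly increases, whereas you derive the closed form $M_1=4n+2k-8-2n_3$ from the degree-sum identities. The formula is correct and more informative (it essentially re-proves Theorem \ref{thm-1} on the spot), but as written there is a gap: monotonicity in $n_3$ only shows that $CT^{1}_{max}$ attains the \emph{minimum of $n_3$ over trees that actually exist} in $\mathcal{CT}_{n,k}$, and the congruence $k\equiv 2n_3+1\pmod 3$ only fixes $n_3$ modulo $3$. To conclude that this minimum ``automatically lies in $\{0,1,2\}$'' you must exhibit a member of the class with $n_3\le 2$; a priori the minimum could be $r+3$ for the relevant residue $r$. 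The cheapest repair is to note that your own $M_2$-switch stays inside $\mathcal{CT}_{n,k}$ (it preserves $n$ and $n_2$, hence $k$) while lowering $n_3$ by $3$, so iterating it from an arbitrary member of the class produces one with $n_3\le2$; alternatively, run the paper's local argument for $M_1$ as well, which avoids any existence input. With that one sentence added, the proof is complete.
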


\begin{proof}
We give a proof by contradiction. Suppose that the tree $CT^{1}_{max}$ (respectively $CT^{2}_{max}$) contains the vertices $u$, $v$ and $w$ of degree 3. We may assume that the vertex $v$ lies on the $u$-$w$ path. Let $w_1,w_2$ be the neighbors of $w$ that do not lie on the $u$-$w$ path. Let $T^{\prime}$ be the tree obtained from $CT^{1}_{max}$ (respectively $CT^{2}_{max}$) by deleting the edges $ww_1$,$ww_2$ and adding the edges $uw_1$,$vw_2$, then it is clear that $T^{\prime}\in \mathcal{CT}_{n,k}$\,.
Denote by $d_x$ the degree of a vertex $x$ in $CT^{1}_{max}$ (respectively in $CT^{2}_{max}$). It can be easily checked that
\[
M_{1}(CT^{1}_{max})-M_{1}(T^{\prime})<0\,,
\]
which is a contradiction to the definition of $CT^{1}_{max}$.\\
Next, we show that $M_{2}(CT^{2}_{max})-M_{2}(T^{\prime})<0,$ which contradicts the definition of $CT^{2}_{max}$.
Let $w_3$ be the unique neighbor of $w$ that lies on the path $u$-$w$. By definition of $M_2$, it holds that
\beq\label{eq-a1}
M_{2}(CT^{2}_{max})-M_{2}(T^{\prime})&=& \sum_{x\in N_G(u)}3d_{x}+\sum_{y\in N_G(v)}3d_{y}+\sum_{i=1}^{3}3d_{w_i}\nonumber\\
&&-\sum_{x\in N_G(u)}4d_{x}-\sum_{y\in N_G(v)}4d_{y}-4d_{w_{1}}-4d_{w_{2}}-d_{w_3}\nonumber\\
&=&2d_{w_3}-d_{w_{1}}-d_{w_{2}} -\sum_{x\in N_G(u)}d_{x} - \sum_{y\in N_G(v)}d_{y}.
\eeq
The right hand side of \eqref{eq-a1} is negative due to the facts that $\sum_{x\in N(u)}d_{x}\ge 4$,  $\sum_{y\in N(v)}d_{y}\ge 5$ and $d_{w_3}\le 4$.
This completes the proof.

\end{proof}

We can now prove Theorem \ref{thm-1}.

\begin{proof}[\bf Proof of Theorem \ref{thm-1}]
Recall that we have denoted by $CT^{1}_{max}$ the tree attaining the maximal first Zagreb index among all the members of $\mathcal{CT}_{n,k}$. By Lemma \ref{sm3b}, $CT^{1}_{max}$ must have at most two vertices of degree 3 and hence by Lemma \ref{zm}, we have
\[
M_{1}(CT^{1}_{max})
=
\begin{cases}
4n+2k-10 &\text{if\  $k\equiv 0$ (mod 3), }\\
4n+2k-8 &\text{if\   $k\equiv 1$ (mod 3), }\\
4n+2k-12 &\text{if\  $k\equiv 2$ (mod 3).}\\
\end{cases}
\]
Now, bearing in mind the definitions (see Section \ref{sec-3}) of $\mathcal{CT}_{0}(n,k)$, $\mathcal{CT}_{1}(n,k)$ and $\mathcal{CT}_{2}(n,k)$, we get the desired result.
\end{proof}

In order to prove Theorem \ref{thm-2}, we need to establish some further structural properties of the tree $CT^{2}_{max}$.

\begin{lem}\label{sm1}
For $3\le k\le n-1$, the tree $CT^{2}_{max}\in \mathcal{CT}_{n,k}$ does not contain any internal path of length greater than 1.
\end{lem}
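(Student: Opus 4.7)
The plan is to proceed by contradiction. Suppose $T = CT^{2}_{max}$ contains an internal path $P: u_0 u_1 u_2 \cdots u_r$ with $r \ge 2$, so that $u_0, u_r$ are branching and $u_1, \ldots, u_{r-1}$ have degree $2$. We will construct $T' \in \mathcal{CT}_{n,k}$ with $M_{2}(T') > M_{2}(T)$, contradicting the definition of $CT^{2}_{max}$.

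The main tool will be a degree-preserving edge swap. Given disjoint edges $e_1 = ab$ and $e_2 = cd$ of $T$, replacing them by either $\{ad, bc\}$ or $\{ac, bd\}$ leaves every vertex degree unchanged; hence if the resulting graph is a tree, it has the same degree sequence as $T$ and in particular lies in $\mathcal{CT}_{n,k}$. A direct computation shows that the $M_2$ change under $\{ab, cd\} \mapsto \{ad, bc\}$ is $(d_a - d_c)(d_d - d_b)$, and under $\{ab, cd\} \mapsto \{ac, bd\}$ is $(d_a - d_d)(d_c - d_b)$; the plan is to arrange $e_1, e_2$ and the reconnection so that this expression is strictly positive.

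To locate $e_2$, we fix any pendent vertex $p$ of $T$ and follow its pendent path inward to the first branching vertex $q^*$; let $q^\#$ be the neighbor of $q^*$ along this path (so $q^\# = p$ when $p$ is a starlike pendent of $q^*$, while otherwise $d_{q^\#} = 2$). Set $e_2 := q^\# q^*$. For $e_1$, we take $u_0 u_1$ when $\{q^\#, q^*\}$ lies in the $u_r$-component of $T \setminus \{u_0 u_1\}$, and $u_{r-1} u_r$ otherwise. A short component analysis then shows that, of the two possible reconnections, exactly one yields a tree $T'$ (the other closes a cycle inside the component containing both endpoints of $e_1$). In each of the resulting sub-cases the quantity $M_2(T') - M_2(T)$ reduces to one of $(d_{u_0} - 1)(d_{q^*} - 2)$, $(d_{u_0} - 2)(d_{q^*} - 2)$, $(d_{u_r} - 1)(d_{q^*} - 2)$, or $(d_{u_r} - 2)(d_{q^*} - 2)$, each of which is strictly positive because $d_{u_0}, d_{u_r}, d_{q^*} \ge 3$.

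The principal obstacle is the case bookkeeping: we must verify that $e_1$ and $e_2$ are genuinely disjoint (the delicate sub-case being when $q^*$ happens to coincide with $u_0$ or $u_r$, which is handled by taking $e_1$ at the opposite endpoint of $P$, so that no vertex is shared and $r \ge 2$ makes the two chosen edges distinct), and we must check in each of the four configurations which of the two reconnections avoids a cycle. Once these routine verifications are completed, the strict positivity displayed above delivers $M_2(T') > M_2(T)$, contradicting the maximality of $CT^{2}_{max}$ and thereby proving the lemma.
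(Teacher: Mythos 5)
Your argument is correct, but it reaches the contradiction by a genuinely different mechanism than the paper's. The paper performs a single three-edge rewiring: it takes an arbitrary pendent edge $uv$, deletes $uv$, $v_0v_1$ and $v_{r-1}v_r$, and adds $v_0v_r$, $uv_1$ and $v_{r-1}v$, thereby collapsing the internal path to the edge $v_0v_r$ and splicing the entire chain of degree-$2$ vertices into the pendent edge; the resulting increment of $M_2$ equals $d_{v_0}d_{v_r}+d_v+2-2d_{v_0}-2d_{v_r}$, and its positivity is settled by observing that $f(x,y)=2(x+y)-xy-4$ is negative for $3\le x,y\le 4$. You instead use the classical degree-preserving two-edge swap with the identity $M_2(T')-M_2(T)=(d_a-d_c)(d_d-d_b)$, choosing the second edge to be the terminal edge of a pendent path (so its branching endpoint $q^*$ is the inner endpoint of that edge) and one end-edge of the internal path as the first; positivity is then immediate as a product of two factors each at least $1$. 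Your route buys a cleaner, reusable positivity argument with no bivariate function to analyze, at the cost of the disjointness and component bookkeeping you acknowledge; the paper's rewiring avoids that bookkeeping because it is insensitive to where the pendent vertex sits (it explicitly allows $v$ to coincide with $v_0$ or $v_r$). Both transformations preserve the degree sequence, hence membership in $\mathcal{CT}_{n,k}$ via $n_2=n-k-1$, so both are complete once the routine verifications are carried out. One small slip in your prose: after deleting two disjoint edges of a tree, the cycle created by the wrong reconnection lies in the middle component containing the two \emph{inner} endpoints (one from each deleted edge), not in ``the component containing both endpoints of $e_1$''; this does not affect the validity of your argument.
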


\begin{proof}
Assume, on the contrary, that there is an internal path $v_{0}v_{1}\cdots v_{r-1}v_{r}$ of length at least 2 in $CT^{2}_{max}$ where $v_0$ and $v_{r}$ are branching vertices and $d_{v_{1}}=d_{v_{2}}=\dots=d_{v_{r-1}}=2$. Let $u$ be a pendent vertex adjacent to some vertex $v\in V(CT^{2}_{max})$. The vertex $v$ may or may not be coincident with either of the vertices $v_0$ and $v_r$. If $CT^{\prime}$ is the tree obtained from $CT^{2}_{max}$ as follows:\\
$CT^{\prime}=CT^{2}_{max}-\{uv,v_0v_1,v_{r-1}v_r\}+\{v_0v_r,uv_1,v_{r-1}v\}$,\\
then $CT^{\prime}\in \mathcal{CT}_{n,k}$.
Whether the vertex $v$ is coincident with either of the vertices $v_0$ and $v_r$, or not, in both cases we have
\beq\label{aa-11}
M_{2}(CT^{2}_{max})-M_{2}(CT^{\prime})
&=& 2d_{v_{0}}+2d_{v_{r}}-d_{v_{0}}d_{v_{r}}-d_{v}-2\nonumber\\
&\leq& -4+2(d_{v_{0}}+d_{v_{r}})-d_{v_{0}}d_{v_{r}}.
\eeq
The right hand side of \eqref{aa-11} is negative because the function $f$ defined by $f(x,y)=2(x+y)-xy-4$, with $3\le x,y\le4$, is decreasing in both $x$ and $y$, and hence we have
$M_{2}(CT^{2}_{max})<M_{2}(CT^{\prime}),$ which is a contradiction to the choice of $CT^{2}_{max}$.
\end{proof}

\begin{lem}\label{sm2}
If the tree $CT^{2}_{max}\in \mathcal{CT}_{n,k}$ contains a pendent vertex adjacent to a branching vertex, then $CT^{2}_{max}$ does not contain a pendent path of length greater than 2 where $3\le k\le n-1$.
\end{lem}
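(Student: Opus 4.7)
The plan is to follow the pattern of Lemma~\ref{sm1}: argue by contradiction using a local edge-swap that preserves the degree sequence — and hence membership in $\mathcal{CT}_{n,k}$, since the number of segments of a tree satisfies $k=n_1+n_3+n_4-1$ (a quantity depending only on the degree sequence) — while strictly increasing the second Zagreb index.

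Suppose, for contradiction, that $CT^{2}_{max}$ contains both a pendent vertex $u$ adjacent to a branching vertex $v$ and a pendent path $w_0w_1\cdots w_\ell$ with $d_{w_0}=1$, $d_{w_\ell}\ge 3$, $d_{w_i}=2$ for $1\le i\le \ell-1$, and $\ell\ge 3$. Define
\[
CT^{\prime}\;=\;CT^{2}_{max}-\{uv,\,w_{\ell-2}w_{\ell-1}\}+\{uw_{\ell-1},\,vw_{\ell-2}\}.
\]
First I would verify that the four vertices $u,v,w_{\ell-2},w_{\ell-1}$ are distinct (their degrees are $1$, $\ge 3$, $2$, $2$, and the last two are distinct because $\ell\ge 3$), and that neither $uw_{\ell-1}$ nor $vw_{\ell-2}$ is a pre-existing edge of $CT^{2}_{max}$ (even in the coincident case $v=w_\ell$, whose only neighbor on the path is $w_{\ell-1}$). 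Removing the two prescribed edges splits $CT^{2}_{max}$ into three components $\{u\}$, $\{w_0,\ldots,w_{\ell-2}\}$, and the remainder (which always contains $v$, $w_{\ell-1}$, and $w_\ell$), and the two added edges glue these back into a single tree on $n$ vertices. Since the degrees of $u,v,w_{\ell-2},w_{\ell-1}$ are each preserved by the swap, $CT^{\prime}$ has the same degree sequence as $CT^{2}_{max}$, so $CT^{\prime}\in\mathcal{CT}_{n,k}$.

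Only the contributions of the four swapped edges to $M_2$ change, so
\[
M_{2}(CT^{\prime})-M_{2}(CT^{2}_{max})\;=\;(1\cdot 2+d_v\cdot 2)-(1\cdot d_v+2\cdot 2)\;=\;d_v-2\;\ge\;1,
\]
which contradicts the maximality of $M_{2}(CT^{2}_{max})$. The main obstacle is checking that $CT^{\prime}\in\mathcal{CT}_{n,k}$ in the borderline configurations — namely $v=w_\ell$ (where $v$ itself is an endpoint of the new edge $vw_{\ell-2}$) and $\ell=3$ (where $w_{\ell-2}=w_1$ is also adjacent to the pendent $w_0$, though that unchanged edge plays no role in the calculation). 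The crucial design choice is cutting the edge $w_{\ell-2}w_{\ell-1}$ in particular: cutting, for instance, $w_0w_1$ instead would give a zero net change in $M_2$, whereas the present choice makes the new edge $vw_{\ell-2}$ contribute a full $2d_v$, forcing the strict increase $d_v-2>0$.
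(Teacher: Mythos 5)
Your proof is correct and uses essentially the same idea as the paper: a degree-sequence-preserving local edge swap on the pendent path that trades the edge-weight pair $\{1\cdot d_v,\,2\cdot 2\}$ for $\{1\cdot 2,\,2\cdot d_v\}$, yielding a strict gain of $d_v-2>0$. The paper performs its (three-edge) swap at the pendent end of the path, relocating the degree-2 vertex $v_2$ onto the edge $uv$, while you perform a cleaner two-edge swap near the branching end, but the verification of $CT^{\prime}\in\mathcal{CT}_{n,k}$ and the resulting inequality are the same.
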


\begin{proof}
Suppose, on the contrary, that $v_1v_2\cdots v_r$ is a pendent path of length at least 3 and there is a pendent vertex $u\in V(CT^{2}_{max})$ adjacent to some branching vertex $v\in V(CT^{2}_{max})$, where $v_1$ is a pendent vertex and $v_r$ is a branching vertex (the vertex $v_r$ may coincides with the vertex $v$). Let $CT^{\prime}=CT^{2}_{max}-\{uv,v_1v_2,v_2v_3\} + \{uv_2,v_2v,v_1v_3\}$.
Certainly, the tree $CT^{\prime}$ belongs to the class $\mathcal{CT}_{n,k}$ and from the fact $d_{v}\geq 3$, it follows that $M_{2}(CT^{2}_{max})-M_{2}(CT^{\prime})=-d_{v}+2 <0$, which is a contradiction to the choice of $CT^{2}_{max}$.
\end{proof}

\begin{lem}\label{sm3}
If the tree $CT^{2}_{max}\in \mathcal{CT}_{n,k}$ contains a pendent vertex adjacent to a vertex of degree 4 then $CT^{2}_{max}$ does not contain any vertex of degree 3 adjacent to a vertex of degree 2 where $3\le k\le n-1$.

\end{lem}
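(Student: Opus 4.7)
The plan is to argue by contradiction: assume $CT^{2}_{max}$ contains both a pendent vertex $u$ whose unique neighbor $v$ has degree $4$ and a vertex $w$ of degree $3$ that has a neighbor $x$ of degree $2$. I will exhibit a tree $CT'\in\mathcal{CT}_{n,k}$ with $M_{2}(CT')>M_{2}(CT^{2}_{max})$, contradicting the maximality of $CT^{2}_{max}$. The transformation will be a local $2$-edge swap on the four vertices $u,v,w,x$, chosen so that all four degrees (and hence the degree sequence, and hence the number of segments $k$) are preserved.

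First I would use Lemma \ref{sm1} to rule out the adjacency between $v$ and $x$: if $vx$ were an edge, then $v$-$x$-$w$ would be an internal path of length $2$ joining the two branching vertices $v$ and $w$, contradicting Lemma \ref{sm1}. Next, deleting the edge $wx$ from $CT^{2}_{max}$ splits the tree into exactly two components $C_{w}\ni w$ and $C_{x}\ni x$. I will treat two cases according to which component contains $v$. In Case A, when $v\in C_{w}$, set $CT'=CT^{2}_{max}-\{uv,wx\}+\{uw,vx\}$. In Case B, when $v\in C_{x}$, set $CT'=CT^{2}_{max}-\{uv,wx\}+\{ux,vw\}$. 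In each case one must verify: (i) the two new edges are not already present in $CT^{2}_{max}$ (the edge $vx$ is excluded by the previous paragraph; in Case B the edge $vw$ is excluded because $v\in C_{x}$ forces the unique $v$-to-$w$ path of $CT^{2}_{max}$ to pass through $x$, so $v$ and $w$ are not adjacent; the edges $uw$ and $ux$ trivially cannot exist since $u$ is pendent with sole neighbor $v$); (ii) the resulting graph is connected with $n-1$ edges, hence a tree; (iii) the degrees of $u,v,w,x$, and therefore of every vertex, are unchanged, so $CT'\in\mathcal{CT}_{n,k}$.

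Because all vertex degrees are preserved, the difference $M_{2}(CT')-M_{2}(CT^{2}_{max})$ picks up contributions only from the four swapped edges. A direct computation yields $(1\cdot 3+4\cdot 2)-(1\cdot 4+3\cdot 2)=+1$ in Case A and $(1\cdot 2+4\cdot 3)-(1\cdot 4+3\cdot 2)=+4$ in Case B, so in either case $M_{2}(CT')>M_{2}(CT^{2}_{max})$, producing the required contradiction. The main obstacle is the structural bookkeeping in steps (i)--(ii): making sure the swap does not duplicate an existing edge or leave the graph disconnected or cyclic. Once this is resolved using Lemma \ref{sm1} together with the elementary observation about the component $C_{x}$, the arithmetic improvement in $M_{2}$ follows immediately.
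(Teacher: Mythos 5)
Your proof is correct. It follows the same general strategy as the paper's---a degree-preserving local edge exchange that strictly increases $M_2$, contradicting the maximality of $CT^{2}_{max}$---but the concrete transformation differs. The paper introduces the second neighbor $t$ of the degree-$2$ vertex and performs a three-edge swap on five vertices, $CT'=CT^{2}_{max}-\{tu,uv,pw\}+\{tv,pu,uw\}$ (in its notation $u$ is the degree-$2$ vertex, $v$ the degree-$3$ vertex, $p$ the pendent vertex, $w$ the degree-$4$ vertex), which relocates the degree-$2$ vertex to sit between $p$ and $w$ and yields $M_2(CT')-M_2(CT^{2}_{max})=d_t>0$ with no case analysis; the only degeneracy it must exclude, $t=w$, is handled by Lemma \ref{sm1} exactly as you handle the possible adjacency between your $v$ and $x$. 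Your two-edge swap on four vertices is arithmetically leaner (a fixed gain of $+1$ or $+4$) but pays for it with the case split on which component of $CT^{2}_{max}-wx$ contains $v$ and with the extra bookkeeping needed to rule out the pre-existing edges $vx$ and $vw$; you carry all of that out correctly, and your observation that the degree sequence (hence $n_2$, hence $k=n-n_2-1$) is preserved is exactly what is needed to keep $CT'$ in $\mathcal{CT}_{n,k}$. Either argument is a valid proof of the lemma.
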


\begin{proof}
Suppose, on the contrary, that $v\in V(CT^{2}_{max})$ is a vertex of degree 3 adjacent to a vertex $u$ of degree 2 and $p\in V(CT^{2}_{max})$ is a pendent vertex adjacent to some vertex $w$ of degree 4. Let $t$ be the neighbor of $u$ different from $v$. Because of Lemma \ref{sm1}, $t$ must be different from $w$. If $CT^{\prime}=CT^{2}_{max}-\{tu,uv,pw\}+\{tv,pu,uw\}$ then we have $M_{2}(CT^{2}_{max})-M_{2}(CT^{\prime}) = -d_t<0$, which is a contradiction to the definition of $CT^{2}_{max}$.
\end{proof}

\begin{lem}\label{sm3-new}
If the tree $CT^{2}_{max}\in \mathcal{CT}_{n,k}$ contains a vertex $u$ of degree 3 then $u$ does not have more than one branching neighbor where $3\le k\le n-1$.

\end{lem}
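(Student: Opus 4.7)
The plan is to argue by contradiction: assume $u\in V(CT^{2}_{max})$ has $d_u=3$ and two branching neighbors $v_1,v_2$ (let $v_3$ denote its third neighbor). Lemma \ref{sm3b} forces at most two vertices of degree $3$ in $CT^{2}_{max}$, and since $u$ is one of them, at least one of $v_1,v_2$ must have degree $4$; without loss of generality $d_{v_1}=4$.

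First I would introduce a ``branch-swap'' transformation at $v_1$: for any $w\in N(v_1)\setminus\{u\}$, set $T_w=CT^{2}_{max}-uv_2-v_1w+v_1v_2+uw$. A routine check shows $T_w$ is a spanning tree and every vertex preserves its original degree, so $T_w\in\mathcal{CT}_{n,k}$. Cancelling unchanged edge contributions gives
\[M_2(CT^{2}_{max})-M_2(T_w)=(d_{v_1}-3)(d_w-d_{v_2})=d_w-d_{v_2}.\]
If some $w$ satisfies $d_w<d_{v_2}$, we already get $M_2(T_w)>M_2(CT^{2}_{max})$, contradicting maximality. Hence every $w\in N(v_1)\setminus\{u\}$ has $d_w\ge d_{v_2}\ge 3$; since $v_2\notin N(v_1)$ (else a triangle forms through $u$) and only $u,v_2$ can have degree $3$ by Lemma \ref{sm3b}, each such $w$ is forced to have degree exactly $4$.

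The main obstacle is this ``hard'' case, where the branch-swap stalls. My plan is to analyse the connected component $C$ of the degree-$4$ induced subgraph of $CT^{2}_{max}$ containing $v_1$. By the previous step, $v_1$ together with its three non-$u$ neighbors lies in $C$. I claim some $z\in C\setminus\{v_1\}$ has a neighbor $z^{*}\notin C$: otherwise, in the subtree $CT^{2}_{max}[C]$ the vertex $v_1$ would have degree $3$ while every other vertex has degree $4$, so the handshake identity would give $3+4(|C|-1)=2(|C|-1)$, forcing $2|C|=-1$, absurd. Further, $z^{*}\notin C$ combined with $z^{*}\in N(z)$ gives $d_{z^{*}}\le 3$ (two adjacent degree-$4$ vertices lie in the same component), and the cases $z^{*}\in\{u,v_2\}$ are ruled out because any non-$v_1$ neighbor of $u$ or of $v_2$ is reached from $v_1$ only through the degree-$3$ vertex $u$ and hence cannot belong to $C$. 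Together with Lemma \ref{sm3b} this forces $d_{z^{*}}\le 2$.

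I close with a second transformation that ``trades'' the edge $uv_2$ for $zv_2$ and the edge $zz^{*}$ for $uz^{*}$:
\[T'=CT^{2}_{max}-uv_2-zz^{*}+uz^{*}+zv_2.\]
A short check (using $u\notin N(z^{*})$ and $z\notin N(v_2)$, both consequences of the exclusions above) confirms that $T'$ is a spanning tree in $\mathcal{CT}_{n,k}$ with the same degree sequence, and
\[M_2(CT^{2}_{max})-M_2(T')=(3d_{v_2}+4d_{z^{*}})-(3d_{z^{*}}+4d_{v_2})=d_{z^{*}}-d_{v_2}\le 2-3<0,\]
giving $M_2(T')>M_2(CT^{2}_{max})$, the required contradiction. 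The bulk of the work sits in the structural/counting argument of the previous paragraph; both transformations are standard degree-preserving local swaps, and the $M_2$ computations are direct.
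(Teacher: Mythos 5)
Your proof is correct in substance but follows a genuinely different route from the paper's. The paper takes a longest path $P$ through $u$ and its two branching neighbours, uses Lemma \ref{sm3b} to locate, on one side of $P$, a vertex $v_k$ of degree $4$ with exactly one branching neighbour (so that $d_{v_{k+1}}\le 2$), and performs a single three-edge swap detaching $u$ from both branching neighbours, joining them directly, and re-inserting $u$ into the edge $v_kv_{k+1}$; the gain is $d_{v_{i-1}}-d_{v_{k+1}}>0$. You instead run a two-stage argument: a first swap at the degree-$4$ neighbour $v_1$ forces all of $N(v_1)\setminus\{u\}$ to have degree $4$; a handshake count on the degree-$4$ component $C$ containing $v_1$ produces a boundary vertex $z\in C$ with a neighbour $z^*\notin C$ of degree at most $3$; and a second swap trades $uv_2,zz^*$ for $uz^*,zv_2$ with gain $d_{v_2}-d_{z^*}>0$. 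Your version is more self-contained in one respect: the paper's existence claim for $v_k$ is asserted rather than argued, whereas your existence argument for $z$ and $z^*$ is explicit. Both approaches rest on the same facts that the swaps preserve the degree sequence (hence preserve $k=n-1-n_2$) and on Lemma \ref{sm3b}.

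Two statements in your write-up are stated too strongly, though neither damages the conclusion. First, ``only $u,v_2$ can have degree $3$'' is not what Lemma \ref{sm3b} gives when $d_{v_2}=4$; but in that case $d_w\ge d_{v_2}=4$ already forces $d_w=4$, so a case split on $d_{v_2}$ repairs the sentence. Second, ``$d_{z^*}\le 2$'' can fail: if $d_{v_2}=4$, the second degree-$3$ vertex permitted by Lemma \ref{sm3b} could be $z^*$ itself. What you actually need, and what does hold in both cases, is $d_{z^*}<d_{v_2}$: if $d_{v_2}=4$ then $d_{z^*}\le 3<4$, while if $d_{v_2}=3$ then $z^*\notin\{u,v_2\}$ forces $d_{z^*}\le 2<3$. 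Stating the final estimate as $d_{z^*}-d_{v_2}\le -1<0$ via this case split completes the proof. (You should also note explicitly that $z^*\ne v_1,v_3$ when verifying $u\notin N(z^*)$; both follow by the same ``reached from $v_1$ only through $u$'' argument you already use.)
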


\begin{proof}
Suppose, on the contrary, that $v$ and $w$ are two branching neighbors of $u$. Let $P=v_1v_2\cdots v_{i-1}v_iv_{i+1}\cdots v_r$ be the longest path containing $u$, $v$ and $w$, where $v_{i-1}=v$, $v_{i}=u$ and $v_{i+1}=w$.
By Lemma \ref{sm3b}, $P$ contains at most two vertices of degree 3 including $u$. If $P$ has two vertices of degree 3 including $u$ then, without loss of generality, we assume that $d_{v_j}=3$ for some $j$, where $1\le j \le i-1$. Thus, there exists some $k$ with $i+1\le k \le r-1$ such that $v_k$ has exactly one branching neighbor and $d_{v_k}=4$. If $CT^{\prime}=CT^{2}_{max}-\{v_{i-1}v_i,v_{i}v_{i+1},v_{k}v_{k+1}\} + \{v_{i-1}v_{i+1},v_{k}v_{i},v_{i}v_{k+1}\}$ then bearing in mind the facts $d_{v_{k+1}}\le 2$, $d_{v_{i+1}}=4$ and $d_{v_{i-1}}=3$ or 4, we have
\begin{align*}
M_{2}(CT^{2}_{max})-M_{2}(CT^{\prime})=&  -d_{v_{i-1}} + d_{v_{k+1}} < 0\,,
\end{align*}
a contradiction to the definition of $CT^{2}_{max}$.
\end{proof}

The next corollary follows directly from Lemmas
\ref{sm1}
and
\ref{sm3-new}.

\begin{cor}\label{sm3-cor}
If the maximum degree of the tree $CT^{2}_{max}\in \mathcal{CT}_{n,k}$ is 4 then the graph induced by the vertices of degree 4 of $CT^{2}_{max}$ is a tree where $3\le k\le n-1$.

\end{cor}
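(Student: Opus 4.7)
The plan is to exploit the fact that $CT^{2}_{max}$ is itself a tree, so the subgraph $H$ induced by its degree-4 vertices is automatically acyclic. Hence it suffices to prove that $H$ is connected; this is trivial when only one vertex of degree 4 exists, so I would assume $n_4 \ge 2$.

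Given any two degree-4 vertices $u$ and $v$, I would consider the unique $u$-$v$ path $P: u = x_0, x_1, \ldots, x_r = v$ in $CT^{2}_{max}$ and argue by contradiction that every internal vertex $x_i$ (for $1 \le i \le r-1$) has degree 4. Because the maximum degree is 4, a non-degree-4 internal vertex has degree 2 or 3 (not 1, since internal vertices of a path are not pendent).

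If some $x_i$ has degree 2, I would look at the maximal subpath $x_p, \ldots, x_q$ of degree-2 vertices containing $x_i$; the adjacent vertices $x_{p-1}$ and $x_{q+1}$ lie on $P$, so they are not pendent, and by maximality have degree $\neq 2$, hence both are branching. Then $x_{p-1}x_p \cdots x_q x_{q+1}$ is an internal path of length $q-p+2 \ge 2$, contradicting Lemma \ref{sm1}. If some $x_i$ has degree 3, then by the previous case both of its path-neighbors $x_{i-1}$ and $x_{i+1}$ must be branching (they cannot be degree 2), which contradicts Lemma \ref{sm3-new}. Thus every internal vertex of $P$ has degree 4, so the whole path lies in $H$, establishing connectedness.

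The main obstacle is essentially nonexistent, which matches the author's claim that the corollary follows directly from the two lemmas: the dichotomy on the degree of a hypothetical non-degree-4 internal vertex reduces immediately to one of the two lemmas. The only subtle bookkeeping point is to verify that the endpoints of a maximal degree-2 subpath strictly inside $P$ are genuinely branching vertices rather than pendent, which is automatic from their location on a path between two degree-4 endpoints.
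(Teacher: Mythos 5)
Your argument is correct and uses exactly the two ingredients the paper invokes (Lemmas \ref{sm1} and \ref{sm3-new}); the paper gives no written proof, stating only that the corollary follows directly from these lemmas, and your path-connectivity argument is precisely the omitted verification. No gaps.
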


Denote by $x_{i,j}(G)$ (or simply by $x_{i,j}$) the number of edges in a graph $G$ connecting the vertices of degrees $i$ and $j$.
The following system of equations holds for any chemical tree $T$:
\begin{equation}\label{Eq557b}
\sum_{ \substack{ 1\leq i\leq 4 \\
         i\neq j}}x_{j,i}+2x_{j,j}=j\cdot n_{j}
\end{equation}
where $j=1,2,3,4$.

We are now in position to prove Theorem \ref{thm-2}.

\begin{proof}[\bf Proof of Theorem \ref{thm-2}]
Recall that we have denoted by $CT^{2}_{max}$ the tree attaining the maximal second Zagreb index among all the members of $\mathcal{CT}_{n,k}$. Thus, $M_2(CT)\le M_2(CT^{2}_{max})$ with equality if and only if $CT \cong CT^{2}_{max}$. If $k=3$, 4, the desired result follows from Theorem 3.1 of \cite{boj-15}.
In what follows, we determine $M_2(CT^{2}_{max})$ under the assumption $5\le k\le n-1$.

By Lemma \ref{sm3b}, the tree $CT^{2}_{max}$ contains at most two vertices of degree 3 and hence by Lemma \ref{zm}, the degree sequence $DS(CT^{2}_{max})$ of $CT^{2}_{max}$ is
\[
DS(CT^{2}_{max})
=
\begin{cases}
(\underbrace{4,4,...,4}_{\frac{k-3}{3}},3,\underbrace{2,2,...,2}_{n-k-1},\underbrace{1,1,...,1}_{\frac{2k+3}{3}}) &\text{if\  $k\equiv 0$ (mod 3), }\\
(\underbrace{4,4,...,4}_{\frac{k-1}{3}},\underbrace{2,2,...,2}_{n-k-1},\underbrace{1,1,...,1}_{\frac{2k+4}{3}}) &\text{if\   $k\equiv 1$ (mod 3), }\\
(\underbrace{4,4,...,4}_{\frac{k-5}{3}},3,3,\underbrace{2,2,...,2}_{n-k-1},\underbrace{1,1,...,1}_{\frac{2k+2}{3}}) &\text{if\  $k\equiv 2$ (mod 3).}\\
\end{cases}
\]
Thus, by Lemmas
\ref{sm1}--\ref{sm3-new} and Corollary \ref{sm3-cor} one can conclude that the tree $CT^{2}_{max}$ belongs to $\mathcal{CT}_{0}'(n,k)$, $\mathcal{CT}_{1}'(n,k)$ or $\mathcal{CT}_{2}'(n,k)$.\\

\textit{Case 1.} The tree $CT^{2}_{max}$ is a member of $\mathcal{CT}_{0}'(n,k)$.\\
We note that $CT^{2}_{max}$ has the degree sequence $$(\underbrace{4,4,...,4}_{\frac{k-1}{3}},\underbrace{2,2,...,2}_{n-k-1},\underbrace{1,1,...,1}_{\frac{2k+4}{3}})$$ and  the congruence $k\equiv$ 1 (mod 3) holds. Because of the assumption $k\ge5$, we have $n_4\ge1$. By Corollary \ref{sm3-cor}, it holds that
\begin{equation}\label{Eq557b-a1}
x_{4,4}=n_4-1=\frac{k-4}{3}.
\end{equation}

\textit{Subcase 1.1.} The inequality $n < \frac{5k+7}{3}$ holds.\\
From the inequality $n < \frac{5k+7}{3}$, we have $n_1>n_2$ and thus (by Lemmas \ref{sm1} and \ref{sm2}), it holds that
\begin{equation}\label{Eq557b-a2}
x_{2,2}=0.
\end{equation}
From
\eqref{Eq557b}, \eqref{Eq557b-a1}
 and \eqref{Eq557b-a2}, it follows that $x_{{1,2}}=x_{2,4}=n-k-1$, $x_{1,4}=\frac{5k-3n+7}{3}$.\\
 Hence
 $$M_2(CT^{2}_{max})=6n+2k-22.$$

\textit{Subcase 1.2.} $n \ge \frac{5k+7}{3}$.\\
In this subcase, it holds that $n_1\le n_2$
and hence (by using Lemmas \ref{sm1} and \ref{sm2}) we have
\begin{equation}\label{Eq557b-a33}
x_{1,4}=0.
\end{equation}
From
\eqref{Eq557b}, \eqref{Eq557b-a1}
and \eqref{Eq557b-a33}, it follows that
$x_{{1,2}}=x_{2,4}=\frac{2k+4}{3}$, $x_{2,2}=\frac{3n-5k-7}{3}$, therefore we have
$$M_2(CT^{2}_{max})=\frac{12n+16k-52}{3}.$$

\textit{Case 2.} $CT^{2}_{max}\in\mathcal{CT}_{1}'(n,k)$.\\
In this case, the tree $CT^{2}_{max}$ has the degree sequence $$(\underbrace{4,4,...,4}_{\frac{k-3}{3}},3,\underbrace{2,2,...,2}_{n-k-1},\underbrace{1,1,...,1}_{\frac{2k+3}{3}})$$ and  the congruence $k\equiv$ 0 (mod 3) holds, which implies that $k\ge 6$ (because of the assumption $k\ge5$). Thus, $n_4\ge1$ and hence by Corollary \ref{sm3-cor}, it holds that
\begin{equation}\label{Eq557b-a1-new}
x_{4,4}=n_4-1=\frac{k-6}{3}.
\end{equation}
Also, it holds that
\begin{equation}\label{Eq557b-a33a0}
x_{3,3}=0.
\end{equation}
By Lemmas
\ref{sm1}
and
\ref{sm3-new}, we have
\begin{equation}\label{Eq557b-a33a}
x_{3,4}=1.
\end{equation}
We note that $x_{2,2}=0$ and $x_{1,4}\ne0$ if $n_2<2n_4+2$; $x_{2,2}=x_{1,4}=0$ if $n_2=2n_4+2$; $x_{1,4}=0$ and $x_{2,2}\ne0$ if $n_2>2n_4+2$. We discuss these three cases in the following.\\

\textit{Subcase 2.1.} $n< \frac{5k}{3}+1$.\\
The inequality $n< \frac{5k}{3}+1$ implies that $n_2<2n_4+2$ and hence, it holds that
\begin{equation}\label{Eq557b-a34}
x_{2,2}=0
\end{equation}
and $x_{1,4}\ne0$, and hence (by Lemma \ref{sm3})
\begin{equation}\label{Eq557b-a35}
x_{2,3}=0.
\end{equation}
From \eqref{Eq557b}, \eqref{Eq557b-a1-new}, \eqref{Eq557b-a33a0},
\eqref{Eq557b-a33a}, \eqref{Eq557b-a34} and \eqref{Eq557b-a35}, it follows that
$x_{2,4}=x_{1,2}=n-k-1$, $x_{1,3}=2$, $x_{1,4}=\frac{5k-3n}{3}$ and hence
$$M_2(CT^{2}_{max})=6n+2k-24.$$

\textit{Subcase 2.2.} $n= \frac{5k}{3}+1$.\\
From $n= \frac{5k}{3}+1$, it follows that $n_2=2n_4+2$ and hence we have
\begin{equation}\label{Eq557b-a36}
x_{2,2}=x_{1,4}=0.
\end{equation}
From \eqref{Eq557b}, \eqref{Eq557b-a1-new}, \eqref{Eq557b-a33a0},
\eqref{Eq557b-a33a} and \eqref{Eq557b-a36}, it follows that
$x_{1,2}=n-k-1$, $x_{1,3}=1$, $x_{2,3}=1$, $x_{2,4}=n-k-2$, and hence
$$M_2(CT^{2}_{max})=\frac{30n-14k-87}{3}.$$

\textit{Subcase 2.3.} $n> \frac{5k}{3}+1$.\\
The inequality $n> \frac{5k}{3}+1$ yields $n_2>2n_4+2$, which further implies that
\begin{equation}\label{Eq557b-a37}
x_{1,4}=0.
\end{equation}
and $x_{2,2}\ne0$, and hence (by Lemmas \ref{sm1} and \ref{sm2})
\begin{equation}\label{Eq557b-a38}
x_{1,3}=0.
\end{equation}
From \eqref{Eq557b}, \eqref{Eq557b-a1-new}, \eqref{Eq557b-a33a0},
\eqref{Eq557b-a33a}, \eqref{Eq557b-a37} and \eqref{Eq557b-a38}, it follows that
$x_{1,2}=\frac{2k+3}{3}$, $x_{2,2}=\frac{3n-5k-6}{3}$, $x_{2,3}=2$, $x_{2,4}=\frac{2k-3}{3}$ and hence
$$M_2(CT^{2}_{max})=\frac{12n+16k-66}{3}.$$

\textit{Case 3.} $CT^{2}_{max}\in\mathcal{CT}_{2}'(n,k)$.\\
In this case, the tree $CT^{2}_{max}$ has the degree sequence $$(\underbrace{4,4,...,4}_{\frac{k-5}{3}},3,3,\underbrace{2,2,...,2}_{n-k-1},\underbrace{1,1,...,1}_{\frac{2k+2}{3}})$$ and  the congruence $k\equiv$ 2 (mod 3) holds. If $k=5$ then $n_4=0$, $x_{3,3}=1$ and hence
$$M_2(CT^{2}_{max})=
\begin{cases}
5n-9 & \text{ \ if $6\le n\le 10$},\\[2mm]
4n+1 & \text{ \ if $n> 10$}.
\end{cases}
$$
Next, in what follows, we assume $k\ge8$, which implies that $n_4\ge1$.
By Corollary \ref{sm3-cor}, it holds that
\begin{equation}\label{Eq557b-a1-new1}
x_{4,4}=n_4-1=\frac{k-8}{3}.
\end{equation}
By Lemmas
\ref{sm1}
and
\ref{sm3-new}, we have
\begin{equation}\label{Eq557b-a33ab}
x_{3,3}=0 \quad \text{and} \quad x_{3,4}=2.
\end{equation}

\textit{Subcase 3.1.} $n\le \frac{5k-7}{3}$.\\
The inequality $n\le \frac{5k-7}{3}$ implies that $n_2\le2n_4$ and hence, it holds that
\begin{equation}\label{Eq557b-a34b}
x_{2,2}=0
\end{equation}
and $x_{1,4}\ne0$, and hence (by Lemma \ref{sm3})
\begin{equation}\label{Eq557b-a35b}
x_{2,3}=0.
\end{equation}
From \eqref{Eq557b}, \eqref{Eq557b-a1-new1},
\eqref{Eq557b-a33ab}, \eqref{Eq557b-a34b} and \eqref{Eq557b-a35b}, it follows that
$x_{1,2}=x_{2,4}=n-k-1$, $x_{1,3}=4$, $x_{1,4}=\frac{5k-3n-7}{3}$ and hence
$$M_2(CT^{2}_{max})=6n+2k-26.$$

\textit{Subcase 3.2.} $\frac{5k-4}{3}\le n\le \frac{5k+2}{3}$.\\
From  $\frac{5k-4}{3}\le n\le \frac{5k+2}{3}$, it follows that $2n_4+1\le n_2\le 2n_4+3$ and hence we have
\begin{equation}\label{Eq557b-a36b}
x_{2,2}=x_{1,4}=0.
\end{equation}
From \eqref{Eq557b}, \eqref{Eq557b-a1-new1},
\eqref{Eq557b-a33ab} and \eqref{Eq557b-a36b}, it follows that
$x_{1,2}=n-k-1$, $x_{2,3}=\frac{3n-5k+7}{3}$, $x_{2,4}=2(\frac{k-5}{3})$, $x_{1,3}=\frac{5k-3n+5}{3}$ and hence
$$M_2(CT^{2}_{max})=\frac{15n+11k-85}{3}.$$

\textit{Subcase 3.3.} $n> \frac{5k+2}{3}$.\\
The inequality $n> \frac{5k+2}{3}$ yields $n_2>2n_4+3$, which further implies that
\begin{equation}\label{Eq557b-a37b}
x_{1,4}=0.
\end{equation}
and $x_{2,2}\ne0$, and hence (by Lemmas \ref{sm1} and \ref{sm2})
\begin{equation}\label{Eq557b-a38b}
x_{1,3}=0.
\end{equation}
From \eqref{Eq557b}, \eqref{Eq557b-a1-new1},
\eqref{Eq557b-a33ab}, \eqref{Eq557b-a37b} and \eqref{Eq557b-a38b}, it follows that
$x_{1,2}=\frac{2k+2}{3}$, $x_{2,2}=\frac{3n-5k-5}{3}$, $x_{2,3}= 4$, $x_{2,4}=2(\frac{k-5}{3})$ and hence
$$M_2(CT^{2}_{max})=\frac{12n+16k-80}{3}.$$
This completes the proof.
\end{proof}

\section{Proofs of Theorems \ref{thm-3} and \ref{thm-4}}\label{sec-5}

Let $C^{\prime}T^{1}_{max}$ (respectively $C^{\prime}T^{2}_{max}$) be the tree with the maximal $M_{1}$ (respectively, $M_2$) value among all members of $\mathcal{CT}^*_{n,b}$ for $1\leq b < \frac{n}{2}-1)$. We need to prove some lemmas first, to prove Theorems \ref{thm-3} and \ref{thm-4}.

\begin{lem}\label{lem-bm4}
Let $1\leq b < \frac{n}{2}-1$. If the tree $C^{\prime}T^{1}_{max}\in \mathcal{CT}^*_{n,b}$ (respectively $C^{ \prime} T^{2}_{max}\in \mathcal{CT}^*_{n,b}$) contains some vertex/vertices of degree 2, then it does not contain any vertex of degree 3. That is, the tree $C^{\prime}T^{1}_{max}\in \mathcal{CT}^*_{n,b}$ (respectively $C^{ \prime} T^{2}_{max}\in \mathcal{CT}^*_{n,b}$) does not contain the vertices of degrees 2 and 3 simultaneously.
\end{lem}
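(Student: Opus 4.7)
The plan is to argue by contradiction. Suppose $T\in\{C^{\prime}T^{1}_{max},C^{\prime}T^{2}_{max}\}$ contains a vertex $w$ of degree $2$ with neighbors $w_1,w_2$, and simultaneously a vertex $u$ of degree $3$. Since $u\neq w$, exactly one index $i\in\{1,2\}$ has the property that $u$ lies in the component of $T-ww_i$ not containing $w_i$; I would designate that $w_i$ as $x$, and define
\[
T^{\prime}:=T-wx+ux.
\]
Since $u$ and $x$ lie in distinct components of $T-wx$, the graph $T^{\prime}$ is again a tree on $n$ vertices. Moreover $ux\notin E(T)$, since otherwise $u,x,w$ would close a triangle in $T$.

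The next step is to verify that $T^{\prime}\in\mathcal{CT}^{*}_{n,b}$. In the transition $T\mapsto T^{\prime}$ only three degrees move: $d_u$ climbs from $3$ to $4$, $d_w$ drops from $2$ to $1$, and $d_x$ is unchanged (it loses the neighbor $w$ and gains the neighbor $u$). Consequently $T^{\prime}$ remains chemical, $u$ is still a branching vertex, $w$ is still non-branching, and the total count of branching vertices is still $b$.

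For $C^{\prime}T^{1}_{max}$ the contradiction is immediate, since
\[
M_1(T^{\prime})-M_1(T)=(4^2-3^2)+(1^2-2^2)=4>0.
\]
For $C^{\prime}T^{2}_{max}$ I would split according to whether $u\sim w$ in $T$. Let $y$ denote the remaining neighbor of $w$ (so $y\ne x$) and $u_1,u_2,u_3$ the neighbors of $u$. A direct bookkeeping of the edges whose endpoints or endpoint-degrees change gives, when $u\not\sim w$ in $T$,
\[
M_2(T^{\prime})-M_2(T)=2d_x+\sum_{i=1}^{3}d_{u_i}-d_y\ \ge\ 2+3-4=1,
\]
and, when $u\sim w$ in $T$ (so that $y=u$),
\[
M_2(T^{\prime})-M_2(T)=2d_x+d_{u_2}+d_{u_3}-2\ \ge\ 2,
\]
either of which contradicts the maximality of $C^{\prime}T^{2}_{max}$.

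The delicate point is the second Zagreb calculation: the remaining neighbor $y$ of $w$ contributes $-d_y$ to the change, and since $d_y$ may reach $4$ a crude estimate on a single quantity is not enough to conclude positivity. The case split on whether $u$ is adjacent to $w$ is what rescues the argument, because in the adjacent case the edge $uw$ alone absorbs both the drop in $d_w$ and the rise in $d_u$ and contributes only $-2$ rather than $-d_y$, while in the non-adjacent case the lower bounds $d_x\ge 1$ and $\sum_{i=1}^{3}d_{u_i}\ge 3$ suffice to dominate the loss $d_y\le 4$.
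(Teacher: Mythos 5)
Your proof is correct. Both you and the paper argue by contradiction via a local edge switch that turns the degree pair $(3,2)$ into $(4,1)$ while preserving the number of branching vertices and the maximum-degree bound, and then check that the relevant Zagreb index strictly increases; but the switches are mirror images of each other. The paper detaches two neighbours $z_1,z_2$ of the degree-3 vertex and reattaches them to the degree-2 vertex (so the degree-3 vertex drops to degree 1 and the degree-2 vertex rises to degree 4), whereas you detach a single neighbour of the degree-2 vertex and reattach it to the degree-3 vertex. Your single-edge move makes the verification that $T^{\prime}\in\mathcal{CT}^{*}_{n,b}$ immediate and works for an arbitrary degree-2 vertex, at the price of the case split on whether $u\sim w$ and the tight margin $M_2(T^{\prime})-M_2(T)\ge 1$; the paper's two-edge move avoids that case split but must start from a degree-2 vertex having a neighbour of degree at least 3, a choice it makes without justification (such a vertex does exist when $n_2>0$ and $b\ge 1$, by looking at a maximal chain of degree-2 vertices). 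One cosmetic point in your write-up: the claim that $ux\in E(T)$ would ``close a triangle'' is literally true only when $u\sim w$; the clean justification is that $ux\in E(T)$ would place $u$ and $x$ in the same component of $T-wx$, contradicting your choice of $x$. This does not affect the validity of the argument.
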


\begin{proof}
On the contrary, we assume that the conclusion of the lemma is wrong and that the hypothesis of the lemma is true. Let $z$ be a vertex of degree 3 in $C^ {\prime}T^{1}_{max}$ (respectively $C^{\prime}T^{2}_{max}$). We take a vertex $v$ of degree 2 with neighbors $u$ and $w$ such that $d_u\ge 1$ and $d_w\ge3$. Let $N(z)=\{z_1,z_2,z_3\}$ where the vertices $z_1$ and $z_2$ do not lie on the unique $v-z$ path (it is possible that the vertex $z$ or $z_3$ is coincident with $u$ or $w$, and if $z=u$ or $w$ then $z_3=v$). If $T^{\prime}$ is the tree obtained from $C^{\prime}T^{1}_{max}$ (respectively $C^{\prime}T^{2}_{max}$) by deleting the edges $z_1z$, $z_2z$ and adding the edges $vz_1$, $vz_2$, then it can be observed that $T^{\prime}\in \mathcal{CT}^*_{n,b}$, and that
\beqs
M_{1}(C^{\prime}T^{1}_{max})- M_{1}(T^{\prime})
&=& -4 < 0
\eeqs
which is a contradiction to the choice of $C^{\prime}T^{1}_{max}$.

\noindent Also, keeping in mind the facts $d_u\ge1$, $d_{z_1}\ge 1$, $d_{z_2}\ge 1$, $d_w\ge3$ and $d_{z_3}\le 4$, we get
\beqs
M_{2}(C^ {\prime}T^{2}_{max})- M_{2}(T^{\prime})
&=& 2d_{z_3}-2d_u-2d_w-d_{z_1}-d_{z_2}\\
&\le& 4-2d_w < 0,
\eeqs
which is again a contradiction to the definition of $C^{\prime}T^{2}_{max}$.
\end{proof}

\begin{lem}\label{zb-A1}
Let $1\leq b < \frac{n}{2}-1$. For the tree $C^{\prime}T^{1}_{max}\in \mathcal{CT}^*_{n,b}$ (respectively $C^{\prime}T^{2}_{max}\in \mathcal{CT}^*_{n,b}$), the following statements hold:\\
a)  if $n_2 >0$ then  $n_1=2b+2$, $n_2=n-3b-2$, $n_3=0$ and $n_4=b$;\\
b)  $n_2=0$ if and only if $n_1=n-b$, $n_3=3b-n+2$ and $n_4=n-2b-2$.
\end{lem}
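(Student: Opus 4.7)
The proof is essentially a bookkeeping argument built on top of Lemma \ref{lem-bm4}. I would start by recalling the two standard identities that hold for any $n$-vertex chemical tree:
\begin{equation*}
n=n_1+n_2+n_3+n_4 \qquad \text{and} \qquad n_1+2n_2+3n_3+4n_4=2(n-1),
\end{equation*}
together with the defining relation for the number of branching vertices,
$b=n_3+n_4$. Subtracting twice the first identity from the second gives the useful form $n_1=n_3+2n_4+2$, so once one of $n_3$ or $n_4$ is known, the remaining three $n_i$'s are determined.

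For part (a), I would argue as follows. Assume $n_2>0$ in the extremal tree $C^{\prime}T^{1}_{max}$ (respectively $C^{\prime}T^{2}_{max}$). Lemma \ref{lem-bm4} then forces $n_3=0$, because otherwise the tree would contain vertices of degrees $2$ and $3$ simultaneously, contradicting that lemma. Substituting $n_3=0$ into $b=n_3+n_4$ yields $n_4=b$, and then $n_1=n_3+2n_4+2=2b+2$. Finally, the first identity gives $n_2=n-n_1-n_3-n_4=n-3b-2$, as claimed.

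For part (b), the forward direction uses only the three linear relations. Setting $n_2=0$ in $n=n_1+n_2+n_3+n_4$ and using $n_1=n_3+2n_4+2$ and $n_4=b-n_3$, I would obtain
\begin{equation*}
n=(n_3+2(b-n_3)+2)+n_3+(b-n_3)=3b-n_3+2,
\end{equation*}
whence $n_3=3b-n+2$, $n_4=n-2b-2$, and $n_1=n-b$. Conversely, if these three values hold, then their sum is exactly $n$, so $n_2=0$ follows immediately from the first identity; one should also observe that the hypothesis $b<\frac{n}{2}-1$ together with $n_3\ge 0$ keeps the formulas in the valid range (so the characterization is nonvacuous).

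There is essentially no hard step here: the content is packaged entirely in Lemma \ref{lem-bm4}, and all that remains is to solve a small linear system in the $n_i$'s. The only point requiring a sentence of care is the converse in (b), where one must verify that the prescribed values are consistent (both sum to $n$ and satisfy the handshake identity) so that indeed $n_2=0$; this is a one-line check.
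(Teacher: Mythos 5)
Your proof is correct and follows essentially the same route as the paper: invoking Lemma \ref{lem-bm4} to get $n_3=0$ when $n_2>0$, and then solving the linear system formed by $n=n_1+n_2+n_3+n_4$, $n_1+2n_2+3n_3+4n_4=2(n-1)$ and $b=n_3+n_4$. You in fact spell out the elimination more explicitly than the paper does, but the argument is the same.
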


\begin{proof}
a) We note that
\begin{align}\label{Eqb-Aa1}
n_3+n_4=b.
\end{align}
Since $n_2>0$, by Lemma \ref{lem-bm4}, it holds that
\begin{align}\label{Eqb-Aa2}
n_3=0.
\end{align}
From Equations \eqref{eq-aa6}, \eqref{eq-aa7}, \eqref{Eqb-Aa1} and \eqref{Eqb-Aa2}, it follows that
$n_1=2b+2$, $n_2=n-3b-2$ and $n_4=b$.\\

\noindent b) If $n_1=n-b$, $n_3=3b-n+2$ and $n_4=n-2b-2$ then Equation \eqref{eq-aa6} yields $n_2=0$. Conversely, suppose that $n_2=0$. Bearing in mind the assumption $n_2=0$ and by solving Equations \eqref{eq-aa6}, \eqref{eq-aa7}, \eqref{Eqb-Aa1}, we get
$n_1=n-b$, $n_3=3b-n+2$ and $n_4=n-2b-2$.
\end{proof}

\begin{lem}\label{zb-A2}
For the tree $C^{\prime}T^{1}_{max}\in \mathcal{CT}^*_{n,b}$ (respectively $C^{\prime}T^{2}_{max}\in \mathcal{CT}^*_{n,b}$), the inequality
$n_2 >0$ holds if and only if  $1\le b<\frac{n-2}{3}$ where $1\leq b < \frac{n}{2}-1$.
\end{lem}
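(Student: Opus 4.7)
The plan is to derive both directions directly from Lemma \ref{zb-A1}, which already does the structural heavy lifting by pinning down the exact degree sequence of the extremal trees. Since Lemma \ref{zb-A1} is formulated as a case split on whether $n_2>0$ or $n_2=0$, the inequality $1\le b<\frac{n-2}{3}$ should fall out from positivity of the quantities expressing $n_1,n_2,n_3,n_4$ in terms of $n$ and $b$.

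For the forward direction, I would assume $n_2>0$ and invoke Lemma \ref{zb-A1}(a), which gives $n_2=n-3b-2$. The inequality $n-3b-2>0$ rearranges to $b<\frac{n-2}{3}$, and the hypothesis $1\le b$ is part of the class definition of $\mathcal{CT}^*_{n,b}$, so the conclusion is immediate.

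For the converse, I would argue by contradiction: suppose $b<\frac{n-2}{3}$ and $n_2=0$. Applying Lemma \ref{zb-A1}(b) produces $n_3=3b-n+2$; but $b<\frac{n-2}{3}$ is equivalent to $3b-n+2<0$, which forces $n_3<0$, a contradiction since $n_3$ counts vertices. Hence $n_2>0$.

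There is essentially no obstacle here once Lemma \ref{zb-A1} is in hand, because that lemma already encodes the nontrivial structural fact (Lemma \ref{lem-bm4}: degrees $2$ and $3$ cannot coexist in the extremal tree) that lets the degree-sequence equations be solved uniquely in each case. The only mild thing to notice is that both $C^{\prime}T^{1}_{max}$ and $C^{\prime}T^{2}_{max}$ are handled simultaneously, which is fine because Lemma \ref{zb-A1} applies to both. Thus the proof should be a brief, two-direction argument, each direction essentially a one-line rearrangement of the formulas supplied by Lemma \ref{zb-A1}.
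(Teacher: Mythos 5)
Your forward direction is exactly the paper's: apply Lemma \ref{zb-A1}(a) and read off $b<\frac{n-2}{3}$ from $n_2=n-3b-2>0$. For the converse, however, you take a genuinely different and considerably shorter route. The paper proves that $1\le b<\frac{n-2}{3}$ forces $n_2>0$ by induction on $b$: it verifies the base case $b=1$ for starlike trees, shows that a hypothetical counterexample with $n_2=0$ must satisfy $x_{1,4}\ne 0$, and then prunes the pendent neighbours of the second vertex of a longest path to pass to a tree with one fewer branching vertex, invoking the induction hypothesis. You instead argue by contrapositive: if $n_2=0$, Lemma \ref{zb-A1}(b) yields $n_3=3b-n+2$, and nonnegativity of $n_3$ gives $b\ge\frac{n-2}{3}$. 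This is correct, because part (b) of Lemma \ref{zb-A1} is obtained by solving the nonsingular linear system coming from $n=n_1+n_3+n_4$, the degree-sum identity, and $n_3+n_4=b$; extremality of the tree plays no role there, so your contrapositive is legitimate and in fact applies to arbitrary chemical trees, which is also the level of generality the paper's induction establishes (its induction hypothesis is stated for all chemical trees of the given order and branching number). In short, both arguments are sound; yours replaces a page of induction and pruning with a one-line sign check on $n_3$, at no loss of generality, while the paper's longer argument only adds structural detail that the lemma does not require.
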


\begin{proof}
If $n_2>0$, then by using Lemma \ref{zb-A1}$(a)$ we have $n_2=n-3b-2$ and hence $b<\frac{n-2}{3}$. Conversely, suppose that $1\le b<\frac{n-2}{3}$, that is $n\ge 3b+3$ with $b\ge1$. We have to show that $n_2>0$ and we will prove it by induction on $b$.
For $b=1$, we have $n\ge6$ and the graph in this case is the starlike tree with maximum degree at most 4, and hence the result is true for $b=1$. Assume that every chemical tree of order at least $3k+3$ with exactly $k$ branching vertices contains at least one vertex of degree 2, where $k\ge1$. Let $C^{\prime}T^{1}_{max}$ (respectively $C^{\prime}T^{2}_{max}$) be the chemical tree of order $n\ge 3(k+1)+3$ with exactly $k+1$ branching vertices. We have to show that $n_2>0$. Contrarily, suppose that $n_2=0$. By Lemma \ref{zb-A1}$(b)$, $n_4=n-2(k+1)-2>0$ because $n\ge 3(k+1)+3$.

We claim that $x_{1,4}\ne 0$. If $x_{1,4}= 0$ then the identity $x_{1,3}+ x_{1,4}=n_1$ gives $x_{1,3}=n_1$ and hence any branching vertex has at most two pendant neighbors, and thus it holds that $n_1\le 2(k+1)$. Also, the inequality $n\ge 3(k+1)+3$ implies that $n_1=n-(k+1)\ge 2(k+1)+3$ (because of Lemma \ref{zb-A1}$(b)$), which is a contradiction to the inequality $n_1\le 2(k+1)$. Thus, $x_{1,4}\ne 0$.

Now, let $P:u_1u_2\cdots u_{r-1}u_r$ be the longest path in $C^{\prime}T^{1}_{max}$ (respectively $C^{\prime}T^{2}_{max}$). We note that $u_2$ and $u_{r-1}$ are the branching vertices and that every neighbor, not lying on the path $P$, of either of these two vertices is pendent. If $d_{u_2}=4$ then let $T'$ be the graph obtained from $C^{\prime}T^{1}_{max}$ (respectively $C^{\prime}T^{2}_{max}$) by removing all the pendent neighbors of $u_2$ and if $d_{u_2}=3$ then let $T'$ be the graph obtained from $C^{\prime}T^{1}_{max}$ (respectively $C^{\prime}T^{2}_{max}$) by removing all the pendent neighbors of $u_2$ and removing a pendent neighbor of a vertex of degree 4. Clearly, the tree has order at least $3k+3$ and exactly $k$ branching vertices. Hence, by induction hypothesis $T'$ contains at least one vertex of degree 2. Thus, the tree $C^{\prime}T^{1}_{max}$ (respectively $C^{\prime}T^{2}_{max}$) has also at least one vertex of degree 2. This completes the induction and hence the proof.
\end{proof}

\begin{proof}[\bf Proof of Theorem \ref{thm-3}]
Recall that we have denoted by $C^{\prime}T^{1}_{max}$ the tree attaining the maximal first Zagreb index among all the members of $\mathcal{CT}^*_{n,b}$. By Lemma \ref{lem-bm4}, $C^{\prime}T^{1}_{max}$ cannot contain the vertices of degrees 2 and 3 simultaneously and hence by Lemmas \ref{zb-A1} and \ref{zb-A2}, we have
\[
M_{1}(C^{\prime}T^{1}_{max})
=
\begin{cases}
4n+6b-6 &\text{if\  $1\leq b < \frac{n-2}{3}$, }\\[3mm]
8n-6b-14 &\text{if\  $\frac{n-2}{3}\le b \le \frac{n}{2}-1$.}\\
\end{cases}
\]
Now, bearing in mind the definitions of $\mathcal{BT}_{1}(n,b)$ and $\mathcal{BT}_{2}(n,b)$ (see Section \ref{sec-3}), we get the desired result.
\end{proof}

In what follows, we prove some further structural properties of the tree $C^{\prime}T^{2}_{max}$, which are needed to prove Theorem \ref{thm-4}.

\begin{lem}\label{lem-bm1}
For $1\leq b < \frac{n}{2}-1$, the tree $C^{\prime}T^{2}_{max}\in \mathcal{CT}^*_{n,b}$ does not contain any internal path of length greater than 1.
\end{lem}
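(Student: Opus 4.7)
The plan is to replay the argument of Lemma \ref{sm1} almost verbatim, since the edge-switch used there preserves the degree of every vertex. Degree preservation trivially implies preservation of the number of branching vertices, which is exactly the invariant we need here to stay within the class $\mathcal{CT}^*_{n,b}$ (in Lemma \ref{sm1} the invariant was the number of segments, but segments and branching vertices are simultaneously determined by the degree sequence in a chemical tree).

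I would argue by contradiction. Suppose $C^{\prime}T^{2}_{max}$ contains an internal path $P\colon v_0 v_1 \cdots v_r$ of length $r\ge 2$, where $v_0,v_r$ are branching vertices and $d_{v_1}=\cdots=d_{v_{r-1}}=2$. Pick any pendent vertex $u$ (at least two exist in any tree) and let $v$ be its unique neighbor; since the tree contains a branching vertex we have $n\ge 4$, so connectedness forces $d_v\ge 2$. Define
\[
T'=C^{\prime}T^{2}_{max}-\{uv,\,v_0v_1,\,v_{r-1}v_r\}+\{v_0v_r,\,uv_1,\,v_{r-1}v\}.
\]
A short case analysis on where $v$ sits relative to $P$ shows that $T'$ is again a tree on the same vertex set. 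Since every vertex keeps its original degree, $T'$ is chemical and lies in $\mathcal{CT}^*_{n,b}$.

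The only edges whose contributions to $M_2$ differ between the two graphs are the three removed ones and the three added ones, which gives
\[
M_2(C^{\prime}T^{2}_{max})-M_2(T')= 2(d_{v_0}+d_{v_r})-d_{v_0}d_{v_r}-d_v-2.
\]
Using $d_v\ge 2$ and $d_{v_0},d_{v_r}\in\{3,4\}$, this is bounded above by $f(d_{v_0},d_{v_r}):=2(d_{v_0}+d_{v_r})-d_{v_0}d_{v_r}-4$, whose values on $\{3,4\}^2$ are $-1,-2,-4$. Hence $M_2(T')>M_2(C^{\prime}T^{2}_{max})$, contradicting maximality and completing the proof.

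The only delicate point, and really the main (modest) obstacle, is confirming that $T'$ is a genuine tree rather than a multigraph or a disconnected graph with a cycle. One checks the cases $v\notin V(P)$, $v=v_0$, and $v=v_r$; the case $v=v_i$ with $1\le i\le r-1$ is ruled out because such a $v_i$ has degree $2$ on $P$ already and cannot also be adjacent to the pendent $u$. Each remaining case is a routine verification that the switch corresponds to removing the path $v_0\cdots v_r$ and the pendent edge $uv$ and reinserting a shorter bridge together with the relocated pendent, giving back a spanning tree; this is exactly the verification tacitly performed in Lemma \ref{sm1}.
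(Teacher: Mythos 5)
Your proof is correct and is essentially the paper's own argument: the paper simply states that the proof is ``fully analogous to that of Lemma \ref{sm1},'' and you have carried out exactly that transfer, correctly noting that the switch preserves all vertex degrees (hence membership in $\mathcal{CT}^*_{n,b}$) and reproducing the same $M_2$ difference $2(d_{v_0}+d_{v_r})-d_{v_0}d_{v_r}-d_v-2<0$. No issues.
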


\begin{proof}
The proof is fully analogous to that of Lemma \ref{sm1}.
\end{proof}

\begin{lem}\label{lem-bm2}
If the tree $C^{\prime}T^{2}_{max}\in \mathcal{CT}^*_{n,b}$ contains a pendent vertex adjacent to a vertex of degree 4, then $C^{\prime}T^{2}_{max}$ does not contain adjacent vertices of degree 3 where $1\leq b < \frac{n}{2}-1$.
\end{lem}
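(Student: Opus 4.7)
The plan is to follow the same contradiction-by-edge-swap template used in Lemmas \ref{sm1}--\ref{sm3}. First I would assume, for contradiction, that the tree $C^{\prime}T^{2}_{max}$ simultaneously contains adjacent vertices $u,v$ of degree $3$ and a pendent vertex $p$ adjacent to a vertex $w$ of degree $4$. Because the degrees $d_p=1$, $d_u=d_v=3$, $d_w=4$ are all different, the four vertices $u,v,p,w$ are pairwise distinct, and $uv$, $pw$ are two disjoint edges on which to operate.

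The key transformation swaps these two edges for a new pair chosen according to the global structure of $C^{\prime}T^{2}_{max}$. Removing the edge $uv$ splits the tree into two subtrees $T_u$ and $T_v$ containing $u$ and $v$ respectively. Since $w\notin\{u,v\}$, it lies in exactly one of them; by the symmetry between $u$ and $v$ I may assume $w\in T_u$. I would then set
\[
T' \;:=\; C^{\prime}T^{2}_{max}-\{uv,pw\}+\{vw,up\}.
\]
Deleting $\{uv,pw\}$ produces three components $\{p\}$, $T_u\setminus\{p\}$ and $T_v$. The edge $vw$ joins $T_u\setminus\{p\}$ to $T_v$ (and cannot already belong to $C^{\prime}T^{2}_{max}$, since $v$ and $w$ lie on opposite sides of $uv$), and the edge $up$ reattaches $p$ (which is new because $p$'s only neighbor was $w$). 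Hence $T'$ is a tree on $n$ vertices. A direct degree check shows that each of $u,v,w,p$ loses exactly one neighbor and gains exactly one, so the entire degree sequence, and in particular the branching-vertex count $b$ and the maximum degree, is preserved; thus $T'\in \mathcal{CT}^*_{n,b}$.

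Because the degree sequence is unchanged, the $M_{2}$ difference collapses to the four affected edges:
\[
M_{2}(T')-M_{2}(C^{\prime}T^{2}_{max})
\;=\;(d_{v}d_{w}+d_{u}d_{p})-(d_{u}d_{v}+d_{p}d_{w})
\;=\;(3\cdot 4+3\cdot 1)-(3\cdot 3+1\cdot 4)\;=\;2\;>\;0,
\]
contradicting the maximality of $C^{\prime}T^{2}_{max}$. The symmetric case $w\in T_v$ is handled by performing the swap $+\{uw,vp\}$ in place of $+\{vw,up\}$, and produces the same numerical gain.

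The main obstacle is the structural bookkeeping around the swap rather than any arithmetic: one must correctly identify which side of $uv$ contains $w$ so that the two new edges bridge the three components without creating a cycle, and verify that neither added edge is already present in $C^{\prime}T^{2}_{max}$ (which follows from the same side analysis together with $p$ having a unique neighbor). Once the swap is set up, the fact that all vertex degrees are preserved reduces the Zagreb-index comparison to the elementary local computation above.
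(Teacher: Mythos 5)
Your proof is correct, and it follows the same overall strategy as the paper (a local edge rewiring that stays in $\mathcal{CT}^*_{n,b}$ and strictly increases $M_2$, contradicting maximality), but the specific exchange is different. The paper detaches the two neighbors $w_1,w_2$ of one of the degree-$3$ vertices (the one not on the path to the pendent vertex) and reattaches them to the pendent vertex, so that the pendent vertex and the degree-$3$ vertex trade roles; the degree sequence is preserved only globally, and the gain of $2$ in $M_2$ comes from the terms $d_wd_z$ and $d_ud_v$ changing from $9+4$ to $3+12$. Your move is instead a degree-preserving $2$-swap, replacing $\{uv,pw\}$ by $\{vw,up\}$ (after orienting by which side of $uv$ contains $w$), which reduces the $M_2$ comparison to the four swapped edges and gives the same net gain $(12+3)-(9+4)=2$. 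Your version has the advantage that no vertex changes degree, so membership in $\mathcal{CT}^*_{n,b}$ and the locality of the $M_2$ computation are immediate; the connectivity bookkeeping you carry out (three components rejoined by two new edges, neither of which was already present) is the only extra work, and you handle it correctly. One trivial quibble: $u\ne v$ follows from their adjacency in a simple graph, not from their degrees, which are equal.
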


\begin{proof}
Assume, on the contrary, that $w,z\in V(C^{\prime}T^{2}_{max})$ are the adjacent vertices of degree 3 and that $u\in V(C^{\prime}T^{2}_{max})$ is a pendent vertex  adjacent to a vertex $v\in V(C^{\prime}T^{2}_{max})$ of degree 4. Without loss of generality, we assume that $z$ lies on the unique $u-w$ path. Let $w_1$ and $w_2$ be the neighbors of $w$ different from $z$. If $T^{\prime}=C^{\prime}T^{2}_{max}-\{w_{1}w,w_{2}w\}+\{uw_{1},uw_{2}\}$, then it can easily be observed that $T^{\prime}\in \mathcal{CT}^*_{n,b}$ and $
M_{2}(C^{\prime}T^{2}_{max})- M_{2}(T^{\prime})
=-2<0,
$
which is a contradiction to the choice of $C^{\prime}T^{2}_{max}$.
\end{proof}

\begin{lem}\label{lem-bm3}
If the tree $C^{\prime}T^{2}_{max}\in \mathcal{CT}^*_{n,b}$ contains a pendent vertex adjacent to a branching vertex, then it does not contain any pendent path of length greater than 2 where $1\leq b < \frac{n}{2}-1$.
\end{lem}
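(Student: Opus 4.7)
The strategy mirrors the proof of Lemma \ref{sm2}, but with the added requirement that the edge-swap preserves the number of branching vertices rather than the number of segments. Arguing by contradiction, suppose $C^{\prime}T^{2}_{max}$ simultaneously contains a pendent vertex $u$ adjacent to a branching vertex $v$ and a pendent path $v_1v_2\cdots v_r$ of length at least $3$ (so $r\ge 4$), with $v_1$ pendent, $v_r$ branching, and $d_{v_2}=d_{v_3}=\cdots=d_{v_{r-1}}=2$; note that $v$ may coincide with $v_r$, and similarly $u$ may lie on one of the branches emanating from $v_r$, but these cases cause no essential difficulty.

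I would then consider exactly the local modification used in Lemma \ref{sm2}, namely the tree
$$CT'=C^{\prime}T^{2}_{max}-\{uv,v_1v_2,v_2v_3\}+\{uv_2,v_2v,v_1v_3\}.$$
A direct check shows that $CT'$ is still a tree (the vertex $v_2$ serves as a relay reconnecting the three components produced by the three deletions) and that every vertex retains its original degree: $d_u$ and $d_{v_1}$ remain $1$, $d_{v_3}$ remains $2$, $d_v$ is unchanged, and $v_2$ still has degree $2$. In particular the degree sequence is preserved, so $b=n_3+n_4$ is preserved and $CT'\in\mathcal{CT}^*_{n,b}$.

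Finally, only the six edges listed above contribute to the change in $M_2$, and since $d_{v_3}=2$ (because $r\ge 4$ forces $v_3$ to be an internal vertex of the pendent path) a one-line calculation gives
$$M_2(C^{\prime}T^{2}_{max})-M_2(CT')=(d_v+2+4)-(2+2d_v+2)=2-d_v,$$
which is at most $-1$ because $v$ is branching and hence $d_v\ge 3$. This contradicts the maximality of $C^{\prime}T^{2}_{max}$. The only step requiring genuine care is verifying that the rewiring yields a valid tree in $\mathcal{CT}^*_{n,b}$ in the corner case $v=v_r$ (where three of the affected edges share an endpoint); the rest is routine bookkeeping and an arithmetic check.
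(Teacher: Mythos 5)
Your proof is correct and is essentially the paper's own argument: the paper disposes of this lemma by declaring it ``fully analogous to that of Lemma \ref{sm2}'', and you have carried out exactly that edge-swap $-\{uv,v_1v_2,v_2v_3\}+\{uv_2,v_2v,v_1v_3\}$ with the same computation $M_2(C^{\prime}T^{2}_{max})-M_2(CT')=2-d_v<0$. Your extra verification that the swap preserves all degrees (hence the number of branching vertices, so $CT'\in\mathcal{CT}^*_{n,b}$) is precisely the point that makes the analogy go through.
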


\begin{proof}
The proof is fully analogous to that of Lemma \ref{sm2}.
\end{proof}

\begin{lem}\label{lem-bm5}
For $1\leq b < \frac{n}{2}-1$\,, each vertex of degree 3 (if exists) of the tree $C^{\prime}T^{2}_{max}\in \mathcal{CT}^*_{n,b}$ has at most one neighbor of degree 4.
\end{lem}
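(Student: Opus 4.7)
My plan is to argue by contradiction, adapting the edge-swap used in Lemma \ref{sm3-new}. Suppose for contradiction that some vertex $u \in V(C^{\prime}T^{2}_{max})$ of degree $3$ has two neighbors $v, w$ of degree $4$. Because $u$ has degree $3$, Lemma \ref{lem-bm4} immediately gives $n_2 = 0$, so every vertex of $C^{\prime}T^{2}_{max}$ has degree in $\{1, 3, 4\}$.

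The main setup will be to take a longest path $P : v_1 v_2 \cdots v_r$ in $C^{\prime}T^{2}_{max}$ containing $v, u, w$, say $v_{i-1} = v$, $v_i = u$, $v_{i+1} = w$; standard longest-path reasoning forces $d_{v_1} = d_{v_r} = 1$, and since $w$ is not a leaf we have $i+1 \le r-1$. Then I will locate an index $k$ with $i+1 \le k \le r-1$, $d_{v_k} = 4$, and $d_{v_{k+1}} \ne 4$ by walking along $P$ from $v_{i+1}$ towards $v_r$ and taking $k$ to be the last index of the initial run of degree-$4$ vertices. Such a $k$ is well defined and lies in the required range, since $d_{v_r} = 1 \ne 4$ guarantees the run terminates before reaching $v_r$; the condition $n_2 = 0$ together with the maximality of $k$ forces $d_{v_{k+1}} \in \{1, 3\}$.

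With that $k$ in hand, I will perform the same local surgery as in Lemma \ref{sm3-new}:
\[
T^{\prime} = C^{\prime}T^{2}_{max} - \{v_{i-1}v_i,\, v_i v_{i+1},\, v_k v_{k+1}\} + \{v_{i-1}v_{i+1},\, v_k v_i,\, v_i v_{k+1}\}.
\]
A routine component-count argument on the four pieces produced by removing the three edges shows that $T^{\prime}$ is again a tree, and every vertex loses as many edges as it gains ($v_i$ loses two and gains two; each of $v_{i-1}, v_{i+1}, v_k, v_{k+1}$ loses one and gains one), so the full degree sequence is preserved. In particular the branching-vertex count $b$ is unchanged, giving $T^{\prime} \in \mathcal{CT}^*_{n,b}$. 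Using $d_{v_i} = 3$ and $d_{v_{i-1}} = d_{v_{i+1}} = d_{v_k} = 4$, a short direct computation yields
\[
M_2(C^{\prime}T^{2}_{max}) - M_2(T^{\prime}) = -d_{v_{i-1}} + d_{v_{k+1}} = -4 + d_{v_{k+1}} < 0,
\]
contradicting the maximality of $C^{\prime}T^{2}_{max}$.

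The only substantive departure from the proof of Lemma \ref{sm3-new} is the existence of the index $k$: there, it was produced via the a priori bound from Lemma \ref{sm3b} on the number of degree-$3$ vertices, which is unavailable here. I expect this to be the only step requiring care, but the condition $n_2 = 0$ guarantees that every internal vertex of $P$ has degree in $\{3, 4\}$, so the walk-along-$P$ argument above supplies $k$ with no need for such a bound. I do not foresee any other genuine obstacle.
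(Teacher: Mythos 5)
Your proof is correct. The paper also argues by contradiction via a degree-preserving three-edge swap that creates the edge between the two degree-4 neighbours of the degree-3 vertex $z$, but it reinserts $z$ differently: using $n_2=0$ (Lemma \ref{lem-bm4}) it picks a pendent vertex $u'$ adjacent to a branching vertex $v'\ne z$, forms $T'=C'T^{2}_{max}-\{xz,zy,u'v'\}+\{xy,u'z,zv'\}$, and computes $M_2(C'T^{2}_{max})-M_2(T')=5-2d_{v'}<0$. You instead slide the degree-3 vertex along a maximal path past the initial run of degree-4 vertices, reusing the surgery of Lemma \ref{sm3-new}; your one genuinely new ingredient --- that $n_2=0$ forces the run to terminate at a vertex of degree $1$ or $3$, so the index $k$ exists without the degree-3 count from Lemma \ref{sm3b} --- is exactly the right substitute, and your computation $d_{v_{k+1}}-4<0$ checks out, including in the degenerate case $k=i+1$ where the edge $v_iv_{i+1}$ is removed and re-added. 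What your route buys: all three surgery sites lie on a single path, so the verification that $T'$ is a tree is immediate, and you avoid having to produce a starlike pendent vertex away from $z$ (the paper asserts its existence without comment, and its reconnection argument requires a little care about where $v'$ sits relative to $z$). What the paper's route buys: a shorter, two-line computation with no run-termination argument. Both are valid.
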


\begin{proof}
Suppose, on the contrary, that $z\in V(C^{\prime}T^{2}_{max})$ is a vertex of degree 3 and that the vertices $x, y \in N(z)$ have degree 4. Then, by Lemma \ref{lem-bm4}, the tree $C^{\prime}T^{2}_{max}$ does not contain any vertex of degree 2. Let $u \in V(C^{\prime}T^{2}_{max})$ be a pendent vertex adjacent to a branching vertex $v\ne z$ (it is possible that the vertex $v$ is coincident with $x$ or $y$). If $T^{\prime}= C^{\prime}T^{2}_{max}-\{xz,zy,uv\}+\{xy,uz,zv\}$, then $T^{\prime}\in \mathcal{CT}^*_{n,b}$ and
$
M_{2}(C^{\prime}T^{2}_{max})- M_{2}(T^{\prime})= 5-2d_{v}<0,
$
which is a contradiction to the definition of $C^{\prime}T^{2}_{max}$.
\end{proof}

\begin{lem}\label{lem-bm6a}
For $1\leq b < \frac{n}{2}-1$, the tree $C^{\prime}T^{2}_{max}\in \mathcal{CT}^*_{n,b}$ has at least one vertex of degree 4 and the graph induced by the vertices of degree 4 of $C^{\prime}T^{2}_{max}$ is a tree.
\end{lem}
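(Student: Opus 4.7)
The statement packages two assertions, which I would treat separately. The first --- existence of a vertex of degree $4$ --- is a direct consequence of Lemma \ref{zb-A1}: if $n_2>0$, part (a) gives $n_4=b\ge 1$; if $n_2=0$, part (b) gives $n_4=n-2b-2$, which is strictly positive thanks to the hypothesis $b<\frac{n}{2}-1$.

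For the connectedness assertion, set $T:=C^{\prime}T^{2}_{max}$ and let $H$ be the subgraph of $T$ induced on $V_4:=\{w\in V(T):d_w=4\}$. Since $H$ is a subgraph of the tree $T$, it is automatically a forest, so the task reduces to showing it is connected. I would argue by contradiction: suppose $H$ has at least two components, and pick $u,v\in V_4$ lying in different components of $H$ at minimum $T$-distance $k:=d_T(u,v)\ge 2$. Write the unique $T$-path as $u=x_0,x_1,\ldots,x_k=v$; by the minimality of $k$, no intermediate $x_i$ lies in $V_4$, so each $x_i$ ($1\le i\le k-1$) has degree $2$ or $3$. Two sub-cases are immediate: if some intermediate vertex has degree $2$, its maximal run of consecutive degree-$2$ vertices together with its flanking degree-$\ge 3$ vertices forms an internal path of length $\ge 2$, contradicting Lemma \ref{lem-bm1}; if instead $k=2$ and $d_{x_1}=3$, then $x_1$ has the two degree-$4$ neighbors $u$ and $v$, contradicting Lemma \ref{lem-bm5}.

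The residual sub-case, which I expect to be the main obstacle, is $k\ge 3$ with every intermediate $x_i$ of degree $3$. Here $x_1$ and $x_2$ are adjacent degree-$3$ vertices, so Lemma \ref{lem-bm4} forces $n_2=0$, and the contrapositive of Lemma \ref{lem-bm2} rules out any pendent of $T$ being adjacent to a degree-$4$ vertex. Let $A$ be the component of $T-ux_1$ containing $u$. Because $u$ still has degree $3$ inside $A$, the leaves of $A$ coincide with the pendents of $T$ lying in $A$, and such leaves exist; pick one as $p\in A$ and let $q\in A$ be its unique neighbor. Then $q$ is not of degree $4$ (by the previous observation) nor of degree $2$ (since $n_2=0$), so $d_q=3$ and in particular $q\ne u$.

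The decisive move is then the three-edge swap
\[
T' \;:=\; T \,-\, \{ux_1,\; vx_{k-1},\; pq\} \,+\, \{uv,\; px_1,\; qx_{k-1}\}.
\]
All vertex degrees are preserved, so $T'\in\mathcal{CT}^*_{n,b}$, and a routine component count confirms that the three removals split $T$ into exactly four pieces which the three added edges reassemble into a single tree. Finally,
\[
M_2(T')-M_2(T)=\bigl(4\!\cdot\!4+1\!\cdot\!3+3\!\cdot\!3\bigr)-\bigl(4\!\cdot\!3+4\!\cdot\!3+1\!\cdot\!3\bigr)=1>0,
\]
contradicting the maximality of $T=C^{\prime}T^{2}_{max}$ and completing the proof.
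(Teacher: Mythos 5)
Your proof is correct, and while the overall strategy (reduce to a path joining two degree-$4$ vertices and perform a three-edge swap that raises $M_2$) is the same as the paper's, the organization and the swap itself differ. The paper splits on the range of $b$: for $1\le b<\frac{n-2}{3}$ it has $n_3=0$, so connectedness of the degree-$4$ subgraph is immediate from Lemma \ref{lem-bm1}; for $\frac{n-2}{3}\le b<\frac{n}{2}-1$ it has $n_2=0$, takes a path $u_0u_1\cdots u_r$ with degree-$4$ ends and degree-$3$ interior, picks \emph{any} pendent $v$ adjacent to a branching vertex $w$, and uses the swap $-\{u_0u_1,u_{r-1}u_r,vw\}+\{u_0u_r,u_1v,u_{r-1}w\}$, yielding $M_2$-difference $5-2d_w<0$. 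You instead run a unified case analysis on a minimum-distance pair of degree-$4$ vertices in different components, dispatching the degree-$2$ and $k=2$ cases via Lemmas \ref{lem-bm1} and \ref{lem-bm5}, and deriving $n_2=0$ in the residual case from Lemma \ref{lem-bm4} rather than from the range of $b$; your swap moreover pins down the pendent's neighbor $q$ to have degree exactly $3$ (via the contrapositive of Lemma \ref{lem-bm2}) and locates it inside the component $A$ of $T-ux_1$ containing $u$, which is what makes your component count clean and guarantees $T'$ is a tree. This extra care actually buys something: in the paper's version the pendent's support vertex $w$ is unconstrained, so one must still check that $w$ does not sit in the middle segment of the path (otherwise $u_{r-1}w$ closes a cycle instead of reconnecting), a point the paper glosses over and your choice of $p,q\in A$ sidesteps entirely. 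The trade-off is that your argument is longer and leans on four auxiliary lemmas instead of two.
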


\begin{proof}
If $1\le b < \frac{n-2}{3}$ then by using Lemmas \ref{zb-A1} and \ref{zb-A2}, we have $n_3=0$ and the inequality $b\ge1$ implies that $n_4>0$. Hence, by Lemma \ref{lem-bm1}, the graph induced by the vertices of degree 4 of $C^{\prime}T^{2}_{max}$ is a tree.
In what follows, we assume that $\frac{n-2}{3}\le b < \frac{n}{2}-1$. By Lemmas \ref{zb-A1} and \ref{zb-A2}, it holds that $n_2=0$ and $n_4=n-2b-2>0$. By Lemma \ref{lem-bm1}, every internal path of  $C^{\prime}T^{2}_{max}$ has length 1. Suppose contrarily that the graph induced by the vertices of degree 4 of $C^{\prime}T^{2}_{max}$ is not a tree. Let $u_0u_1u_2\cdots u_r$ be a path of length at least 2 in $C^{\prime}T^{2}_{max}$ such that $d_{u_0}=d_{u_r}=4$ and $d_{u_1}=d_{u_2}=\cdots=d_{u_{r-1}}=3$. Let $v\in V(C^{\prime}T^{2}_{max})$ be a pendent vertex adjacent to a branching vertex $w$. If $T^{\prime}= C^{\prime}T^{2}_{max}-\{u_0u_1,u_{r-1}u_r,vw\}+\{u_0u_r,u_1v,u_{r-1}w\}$, then $T^{\prime}\in \mathcal{CT}^*_{n,b}$ and
$
M_{2}(C^{\prime}T^{2}_{max})- M_{2}(T^{\prime})= 5-2d_{w}<0,
$
which is a contradiction to the definition of $C^{\prime}T^{2}_{max}$. This completes the proof.
\end{proof}

Finally, we are now able to give the proof of Theorem \ref{thm-4}.

\begin{proof}[\bf Proof of Theorem \ref{thm-4}]
Recall that we have denoted by $C^{\prime}T^{2}_{max}$ the tree attaining the maximal second Zagreb index among all the members of $\mathcal{CT}^*_{n,b}$. Thus, $M_2(BT)\le M_2(C^{\prime}T^{2}_{max})$ with equality if and only if $BT \cong C^{\prime}T^{2}_{max}$. In what follows, we determine $M_2(C^{\prime}T^{2}_{max})$.

By Lemmas \ref{zb-A1} and \ref{zb-A2}, the degree sequence $DS(C^{\prime}T^{2}_{max})$ of $C^{\prime}T^{2}_{max}$ is
\[
DS(C^{\prime}T^{2}_{max})
=
\begin{cases}
(\underbrace{4,4,...,4}_{b},\underbrace{2,2,...,2}_{n-3b-2},\underbrace{1,1,...,1}_{2b+2}) &\text{if \  $1\le b<\frac{n-2}{3}$\,, }\\[6mm]
(\underbrace{4,4,...,4}_{n-2b-2},\underbrace{3,3,...,3}_{3b-n+2},\underbrace{1,1,...,1}_{n-b}) &\text{if \  $\frac{n-2}{3}\le b \le \frac{n}{2}-1$.}
\end{cases}
\]
Now, by Lemmas
\ref{lem-bm1} -- \ref{lem-bm6a} one can conclude that the tree $C^{\prime}T^{2}_{max}$ belongs to $\mathcal{BT'}_{1}(n,b)$ or $\mathcal{BT'}_{2}(n,b)$.\\

\textit{Case 1.} $1\le b < \frac{n-2}{3}$.\\
In this case, we have $n_1=2b+2$, $n_2=n-3b-2$, $n_3=0$, $n_4=b$  and hence (by Lemma \ref{lem-bm6a}), it holds that
\begin{equation}\label{Eq-Ba0}
x_{4,4}=n_4-1=b-1.
\end{equation}

\textit{Subcase 1.1} $1\le b \le \frac{n-4}{5}$.\\
In this subcase, it holds that $x_{1,4}=0$ and hence from \eqref{Eq557b} and \eqref{Eq-Ba0}, it follows that $x_{1,2}=x_{2,4}= 2b+2$ and $x_{2,2}=n-5b-4$. Thus,
$$M_2(C^{\prime}T^{2}_{max})=4n+16b-12.$$

\textit{Subcase 1.2} $\frac{n-4}{5} < b < \frac{n-2}{3}$.\\
In this subcase, we have $x_{1,4}\ne0$. Thus, it holds that $x_{2,2}=0$ (by Lemmas \ref{lem-bm1} and \ref{lem-bm3}) and hence from \eqref{Eq557b} and \eqref{Eq-Ba0}, it follows that $x_{1,2}=x_{2,4}=n-3b-2$,  $x_{1,4}=5b-n+4$. Thereby,
$$M_2(C^{\prime}T^{2}_{max})=6n+6b-20.$$

\textit{Case 2.} $\frac{n-2}{3}\le b < \frac{n}{2}-1$.\\
In this case, it holds that $n_1=n-b$, $n_2=0$, $n_3=3b-n+2$, $n_4=n-2b-2>0$ and hence (by Lemma \ref{lem-bm6a}), it holds that
\begin{equation}\label{Eq-Ba1}
x_{4,4}=n_4-1=n-2b-3.
\end{equation}

\textit{Subcase 2.1}  $\frac{n-2}{3}\le b< \frac{3n-4}{7}$.\\
In this subcase, we have $x_{1,4}\ne0$, which forces that $x_{3,3}=0$ (by Lemma \ref{lem-bm2}) and hence from \eqref{Eq557b} and \eqref{Eq-Ba1}, we get  $x_{1,4}=3n-7b-4$, $x_{1,3}=6b-2n+4$, $x_{3,4}=3b-n+2$. Thus,
$$M_2(C^{\prime}T^{2}_{max})=10n-6b-28.$$

\textit{Subcase 2.2} $\frac{3n-4}{7} \le b < \frac{n}{2}-1$.
We note that $x_{1,4}=0$ in this subcase and thereby from \eqref{Eq557b} and \eqref{Eq-Ba1}, it follows that $x_{1,3}=n-b$, $x_{3,4}=2n-4b-2$, $x_{3,3}=7b-3n+4$.
Hence,
$$M_2(C^{\prime}T^{2}_{max})=16n-20b-36.$$
This completes the proof.
\end{proof}

\end{document}